\newcounter{subsection1}[section]
\newtheorem{defn}[subsection1]{Definition}
\newtheorem{lemma}[subsection1]{Lemma}
\newtheorem{remark}[subsection1]{Remark}
\newtheorem{example}[subsection1]{Example}
\newtheorem{notation}[subsection1]{Notation}
\newtheorem*{notation*}{Notation}
\newtheorem{normaltheorem}[subsection1]{Theorem}
\newtheorem*{lemmaa}{Lemma A}
\newtheorem*{lemmab}{Lemma B}
\def\refyinKst3{A}
\def\refl22{B}
\numberwithin{equation}{section} 
\numberwithin{subsection1}{section}
\newcounter{theorems}
\newtheorem{theorem}[theorems]{Theorem}
\newtheorem{cor}[theorems]{Corollary}
\DeclareMathOperator{\lip}{lip} 
\DeclareMathOperator{\diam}{diam}
\def\ra{\Rightarrow} 
\def\to{\rightarrow} 
\def\iff{\Leftrightarrow}
\def\sw{\subseteq} 
\def\mc{\mathcal} 
\def\mb{\mathbb} 
\def\sc{\setminus} 
\def\E{\mb{E}} 
\def\P{\mb{P}}
\def\R{\mb{R}} 
\def\Q{\mb{Q}} 
\def\N{\mb{N}}
\def\~{\sim}
\def\-{\,;\,} 
\def\wt{\widetilde}
\def\qed{$\blacksquare$}
\def\1{\mathbbm{1}}
\def\cadlag{c\`{a}dl\`{a}g}
\def\CPP{{C\kern-.05em\raise.23ex\hbox{+\kern-.05em+}}}
\def\Ka{($\mathscr{K}1$)}
\def\Kb{($\mathscr{K}2$)}
\def\Kc{($\mathscr{K}4$)}
\def\Kg{($\mathscr{K}3$)}
\def\slfv{S$\Lambda$FV}
\def\l{\left}
\def\r{\right}
\begin{document}

\tiny
\color{blue}
\textit{Accepted to the Annals of Probability}
\color{black}
\vspace{1pc}
\normalsize

\begin{frontmatter}

\title{The Segregated $\Lambda$-coalescent}
\runtitle{}

\author{\fnms{Nic} \snm{Freeman}\ead[label=e1]{nicfreeman1209@gmail.com}\thanksref{t1}}
\address{School of Mathematics\\ University Walk \\ Bristol \\ \printead{e1}}
\thankstext{t1}{Supported by an EPSRC Studentship at Oxford University.}

\begin{abstract}
We construct an extension of the $\Lambda$-coalescent to a spatial continuum and analyse its behaviour.
Like the $\Lambda$-coalescent, the individuals in our model can be separated into (i) a dust component and (ii) large blocks of coalesced individuals. We identify a five phase system, where our phases are defined according to changes in the qualitative behaviour of the dust and large blocks. We completely classify the phase behaviour, including necessary and sufficient conditions for the model to come down from infinity.

We believe that two of our phases are new to $\Lambda$-coalescent theory and directly reflect the incorporation of space into our model. Firstly, our semicritical phase sees a null but non-empty set of dust. In this phase the dust becomes a random fractal, of a type which is closely related to iterated function systems. 
Secondly, our model has a critical phase in which the coalescent comes down from infinity gradually during a bounded, deterministic time interval.
\end{abstract}

\begin{keyword}[class=MSC]
\kwd[primary ]{60G99}
\kwd[; secondary ]{60J70, 60J85.}
\end{keyword}

\begin{keyword}
\kwd{Lambda Coalescent}
\kwd{Coalescent}
\kwd{Segregated.}
\end{keyword}

\end{frontmatter}


\section{Introduction}
\label{introsec}

Coalescent processes are stochastic models in which a collection of particles start out separated and come together over time. Modern coalescent theory began with the coalescent of \cite{K1982}, which was introduced to describe the family trees of individuals sampled from large haploid populations. Kingman's coalescent was generalized, independently but in the same spirit, by \cite{DK1999}, \cite{P1999} and \cite{S1999}. The resulting process became known as the $\Lambda$-coalescent.

We begin with a heuristic description of the $\Lambda$-coalescent.  At time 0 the $\Lambda$-coalescent starts with a countable infinity of particles, with each particle representing an individual from the population. It is usual to label these initial particles with elements of $\N$. Then, at a countable set of random times during $(0,\infty)$, a subset of the currently present particles are selected and these particles come together to form a coalesced block of particles. This coalesced block is thought of as a single new particle and may subsequently be coalesced into even larger blocks of particles. 

The $\Lambda$-coalescent has been studied intensively over the past decade and its behaviour is now well understood. See \cite{B2009} for an introduction to the $\Lambda$-coalescent and its connections to other parts of probability. 

At any time $t>0$, we can divide the particles within the $\Lambda$-coalescent into two types. Firstly, particles that have not been affected by a coalescence event during $[0,t]$. These particles are singletons at time $t$ and, collectively, make up the \textit{dust component} of the coalescent. Secondly, we have large blocks of particles that were coalesced together during $[0,t]$. Each such block contains a non-trivial proportion of the countable infinity of initial particles (and is therefore infinite itself).

It is possible for the particles within the $\Lambda$-coalescent to come together so fast that the dust vanishes instantaneously after time $0$, leaving only finitely many non-singleton blocks. In this case the $\Lambda$-coalescent is said to come down from infinity.

The $\Lambda$-coalescent is exchangeable, which means that its distribution does not change when the labels of the initial particles are permuted. This implies that the $\Lambda$-coalescent is a non-spatial model, since it means that the random forces which cause groups of particles to coalesce do not depend on the labels of the particles involved.

In reality, children begin life close to their parent
 and only travel so far in a single lifetime. Therefore, it is natural to ask if the geographical space in which the population lives has a noticeable effect on the genealogy of the population. This is believed to be the case, see for example \cite{E2008}. 
 
In this article we construct a spatial extension of the $\Lambda$-coalescent in which the ancestral lines of individuals are more likely to coalesce if those individuals lived nearby. Our model behaves similarly to the $\Lambda$-coalescent but sees additional behaviour, notably an extra phase transition that is directly related to the introduction of space. The corresponding extra phase (known as the critical phase) contains behaviour that we believe is new to $\Lambda$-coalescent theory: in this phase our model comes down from infinity gradually over a deterministic, bounded interval of time.

We define our model in Sections \ref{segspacesec} and \ref{modelintrosec} before stating our main results in Section \ref{ressec}. We compare our model and its behaviour to other $\Lambda$-coalescent type models in Sections \ref{compsec}-\ref{slfvsec}. Our results are proved in Sections \ref{modelformaldef}-\ref{nendsec} and a brief outline of the proofs can be found in Section \ref{proofintro}.

\begin{notation*} All the spaces we consider will be metric spaces and we equip them with the corresponding topology and Borel $\sigma$-field. For sets $A$ and $B$, we write $A\uplus B$ for the disjoint union of $A$ and $B$ (that is, $A\uplus B=A\cup B$ with the implication that $A$ and $B$ are disjoint). We write $\bigcup A=\{a\-\exists b\in A, a\in b\}$. If $A$ is a finite set then we write $|A|$ for the cardinality of $A$, with $|A|=\infty$ if $A$ is infinite. We write $\1\l\{\mc{P}\r\}$ for the function which is $1$ if the property $\mc{P}$ holds and $0$ if it does not. We set $\N_0=\N\cup\{0\}$. 
\end{notation*}

\subsection{Segregated Spaces}
\label{segspacesec}

The geographical space of our model, which we call a \textit{segregated space}, is equipped with a tree structure, as follows. This structure will play a central role in the definition of the Segregated $\Lambda$-coalescent.

Let $\mc{S}\in\{2,3,4,\ldots\}$ and set $S=\{1,2,\ldots,\mc{S}\}$. Let $W_n$ be the set of words $w=w_1w_2\ldots w_n$ of length $n$ with letters $w_i\in S$. Set $W_*=\bigcup_{n=0}^\infty W_n$ be the regular $\mc{S}$-ary tree, where $W_0=\{\emptyset\}$ and $\emptyset$ is the empty word\footnote{We also use the symbol $\emptyset$ for the empty set.}. For each $w=w_1w_2\ldots w_n\in W_n$ we write $|w|=n$, with $|\emptyset|=0$. If $n=0$ then we define $w_1\ldots w_n=\emptyset$. If $w=w_1\ldots w_n$ and $i\in S$ then we set $wi=w_1\ldots w_n i\in W_{n+1}$.


\begin{defn}\label{segspacedef}
Let $(K,D_K)$ be a complete metric space, equipped with a family of non-empty measurable subsets $(K_w)_{w\in W_*}$ and a probability measure $\lambda$. We say $K$ is a segregated space if it satisfies:
\begin{samepage}
\begin{itemize}
\item[{\Ka}] $K=K_{\emptyset}$ and for all $w\in W_*$, $K_w=\biguplus_{i\in S} K_{wi}$.
\item[{\Kb}] There exists a sequence $(L_n)_{n=0}^\infty\sw (0,\infty)$ such that $L_n\to 0$ and for all $w\in W_*$, $\max\{D_K(x,y)\-x,y\in K_w\}\leq L_{|w|}.$
\item[{\Kg}] For all $w\in W_*$, $\lambda(K_w)=\mc{S}^{-|w|}$.
\item[{\Kc}] For all $w\in W_*$ there exists $i\in S$ such that $\overline{K_{wi}}\sw K_w$.
\end{itemize}
\end{samepage}
\end{defn}

We say $K_w$ is a complex of $K$ with level $|w|$. If $K_v\sw K_w$ then we say $K_v$ is a subcomplex of $K_w$.

\begin{example}\label{cantorex}
The prototype example of a segregated space is the middle third Cantor set. This is the unique non-empty compact subset $\mc{K}$ of $[0,1]$ which satisfies $F_1(K)\uplus F_2(K)=K$, where $F_1(x)=x/3$ and $F_2(x)=2/3+x/3$. We set $K_\emptyset=K$ and define $K_w$ iteratively by the relation $K_{wi}=F_i(K_w)$. The measure $\lambda$ is the uniform Bernoulli measure on $K$, with $\mc{S}=2$ and $\lambda(K_w)=2^{-|w|}$.
\end{example}

In Example \ref{cantorex}, $K$ is a totally disconnected set of Lebesgue measure zero. This is unnatural from the point of view of population modelling, where it is usual to use $\R^d$ as a model for spatial continua.

\begin{example}\label{01ex}
Let $\mc{S}=2$ and set $K=K_\emptyset=[0,1]$. Note that
$$[0,1]=[0,1/2]\uplus (1/2,1]=[0,1/4]\uplus (1/4,1/2]\uplus (1/2,3/4]\uplus (3/4,1]=\ldots.$$
Set $K_1=[0,1/2]$, $K_2=(1/2,1]$, $K_{11}=[0,1/4]$, $K_{12}=(1/4,1/2]$, $K_{21}=(1/2,3/4]$ and so on. Take $\lambda$ as Lebesgue measure on $[0,1]$. Note that this example is easily adapted to higher dimensions and $\mc{S}\geq 2$.
\end{example}

We will use {\Ka} so frequently that it would be impractical to reference it on every application. However, we will not use the other conditions without explicitly saying so. The purpose of {\Kb} is as follows: suppose $(w_n)$ is a sequence in $W_*$ such that $|w_n|\to\infty$ and $K_{w_{n+1}}\sw K_{w_n}$, then {\Kb} implies that $\cap_n K_{w_n}$ is either empty or equal to a single point. Condition {\Kc} is designed to prevent pathological examples of the sample space and will be discussed further in Section \ref{Osec}.

Initially, each point of $K$ will be the location of precisely one individual. In view of {\Kg}, we think of $\lambda$ as a uniform measure on $K$. The measure $\lambda$ is important to us because it tells us whether a non-empty set of individuals (i.e.~a subset of $K$) comprises a null or positive proportion of the total population. 

\begin{lemma}\label{Kdlemma} For all $w\in W_*$, $\lambda(K_w)>0$. For all $x\in K$, $\lambda(\{x\})=0$.
\end{lemma}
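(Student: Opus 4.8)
The plan is to exploit the self-similar decomposition \Ka\ together with the uniformity \Kg\ to pin down the $\lambda$-measure of every complex \emph{exactly}, and then to sandwich each singleton inside complexes of arbitrarily small measure.

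First I would treat the complexes. Fix $n\in\N$. By \eqref{Kdisjcomplexes} (the immediate consequence of \Ka), $K$ is the disjoint union of the $|S|^n$ complexes $\{K_w\-w\in W_n\}$. By \Kg\ all of these share a common $\lambda$-measure, say $c_n$, so the (here finite) additivity of $\lambda$ gives $1=\lambda(K)=\sum_{w\in W_n}\lambda(K_w)=|S|^n c_n$. Since $|S|\geq 2$ is finite, this forces $\lambda(K_w)=|S|^{-n}>0$ for every $w\in W_n$, which proves the first assertion with the explicit value $\lambda(K_w)=|S|^{-|w|}$.

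For the singletons, fix $x\in K$. For each $n$, \eqref{Kdisjcomplexes} places $x$ in (exactly) one level-$n$ complex, say $K_{w^{(n)}}$ with $w^{(n)}\in W_n$. Then $\{x\}\subseteq K_{w^{(n)}}$, and monotonicity of $\lambda$ combined with the computation above yields $\lambda(\{x\})\leq\lambda(K_{w^{(n)}})=|S|^{-n}$. As this bound holds for every $n$ and $|S|^{-n}\to 0$, we conclude $\lambda(\{x\})=0$.

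The argument uses only \Ka\ and \Kg; condition \Kb\ (which guarantees that the complexes shrink in diameter) is not needed here, since the purely measure-theoretic bound $|S|^{-n}$ already tends to zero without any reference to the metric. There is essentially no obstacle to overcome: the only point worth checking is that every quantity is well defined, which holds because the $K_w$ are measurable by hypothesis and singletons are Borel-measurable as $K$ is a metric space. The single substantive ingredient is the exact evaluation $\lambda(K_w)=|S|^{-|w|}$, from which both halves of the lemma fall out immediately.
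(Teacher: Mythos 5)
Your proof is correct and follows essentially the same route as the paper: the level-$n$ decomposition from \Ka\ plus the equal-measure condition \Kg\ gives $\lambda(K_w)=|S|^{-|w|}$, and the singleton claim follows by trapping $\{x\}$ in a level-$n$ complex for every $n$. The paper phrases the second half as a contradiction ($\lambda(\{x\})=\delta>0$ would force $\lambda(K)\geq|S|^n\delta$), but this is the same estimate written the other way around.
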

\begin{proof}
The first statement follows trivially from {\Kg}. If $x\in K$ then by {\Ka} for all $n\in\N$ we have $x\in K_w$ for some $w\in W_n$. Since $\uplus_{w\in W_n}K_w=K$ we have $1=\lambda(K)=\mc{S}^n\lambda(K_w)\geq\mc{S}^n\lambda(\{x\})$. Since $n\in\N$ was arbitrary we must have $\lambda(\{x\})=0$.
\end{proof}

\subsection{The Segregated $\Lambda$-coalescent}
\label{modelintrosec}

For the remainder of this article, let $K$ be a segregated space. Let $\lambda_w$ be the restriction of $\lambda$ to $K_w$, defined by $\lambda_w(\cdot)=\frac{\lambda(K_w\cap \cdot)}{\lambda(K_w)}$.

In this section we define our model, which we formulate as a stochastic flow on $K$. The rate of coalescence in our model is controlled by a sequence $(r_n)_{n\in \N_0}$ (recall $\N_0=\N\cup\{0\}$) such that $r_n\geq 0$ for all $n$. To avoid degeneracy, we assume that $r_n>0$ for some $n\in\N_0$.

\textbf{Heuristic definition:} For each $w\in W_*$ the complex $K_w$ is equipped with an exponential clock that rings repeatedly and at rate $r_n$, where $n=|w|$. Informally, if the clock for $K_w$ rings at time $t\in\R$ then all the particles which are in $K_w$ at time $t-$ are coalesced together and jump at time $t$ into a location $p$ that is sampled according to $\lambda_w$. When this occurs we say that a \textit{coalescence event} $(t,w,p)$ has occurred in $K_w$ at time $t$ with parent point $p$. We say the points $x\in K_w$ are affected by the coalescence event. 

We write the resulting flow of particles as $X_{s,t}:K\to K$, where $X_{s,t}(x)$ is the location at time $t$ of that the particle which was at $x$ at time $s<t$. A graphical representation of the above paragraph can be seen in Figure \ref{lintracfig}. 

It is possible to sample the parent points $p$ according to some measure other than $\lambda_w$, but in this article (for brevity) we restrict ourselves to that special case. See \cite{F2012} for a more general mechanism.

\begin{figure}
\includegraphics[height=2in,width=4.9in]{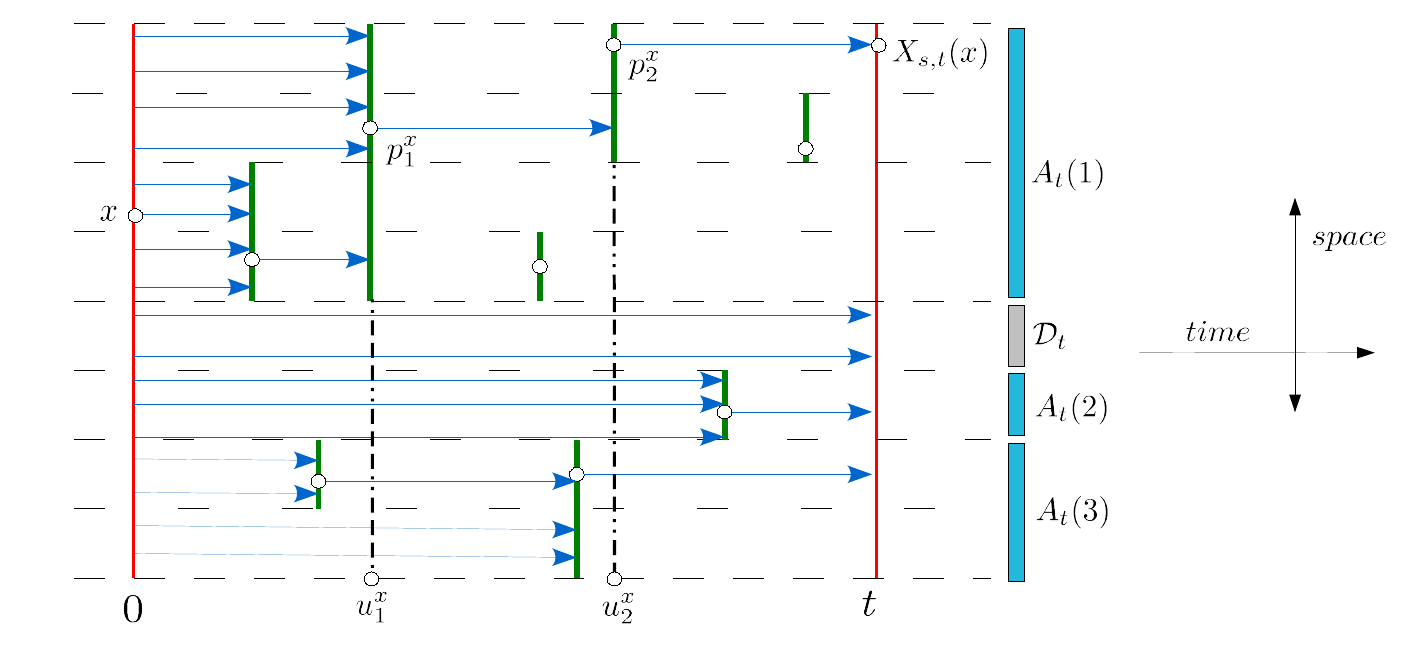} 
\caption{\label{lintracfig} \small The complexes of the geographical space with $\mc{S}=2$ are shown down to level 3, with dotted lines. Coalescence events are shown as thick vertical lines, with parents as circular dots. In this realization there are no coalescence events occurring in complexes of level 4 and above. The movement of some sample particles over $[0,t]$ is shown using arrows. The division of $K$ into dust $\mc{D}_t$ and non-trivial blocks $A_t=\{A_t(1),A_t(2),A_t(3)\}$ (see Section \ref{ressec}) over time $[0,t]$ is also shown. The sequences $(u^x_m)$ and $(p^x_m)$ corresponding to the point $x\in K$ are shown to illustrate Definition \ref{Eseq} (in this example, $N^x_1=1$, $N^x_2=2$, and $N^x_3=\infty$).} 
\end{figure} 

Of course, our heuristic definition only makes mathematical sense if the total number of coalescence events during $[s,t]$ is a.s.~finite for all $s<t$ (equivalently, if the total rate $\sum \mc{S}^nr_n$ of all the exponential clocks is finite). However, it does provides accurate intuition for the behaviour of our model in the general case. For arbitrary $(r_n)$, a mathematical definition which formalizes this intuition can be found in Section \ref{modelformaldef}. We state our existence theorem below, with the understanding that it applies to the definition in Section \ref{modelformaldef}. The proof appears in Section \ref{existproof}.

Let $\mb{M}$ be the space of functions mapping $K$ to itself, equipped with the metric $||f,g||_{\infty}=\sup\{D_K(f(x),g(x))\-x\in K\}.$ 

\begin{theorem}\label{existthm}
For each $s\leq t$, $X_{s,t}$ is an $\mb{M}$-valued random variable. The following properties hold.
\begin{itemize}
\item For all $s\leq t\leq u$, $X_{s,u}=X_{t,u}\circ X_{s,t}$ surely.
\item For all $s\leq t<u\leq v$, $X_{u,v}$ and $X_{s,t}$ are independent.
\item If $t_1-s_1=t_2-s_2$ then $X_{s_1,t_1}$ and $X_{s_2,t_2}$ are identically distributed.
\item For all $t\in \R$ and $x\in K$, $X_{t,t}(x)=x$ surely.
\end{itemize}
\end{theorem}

The formula $X_{s,u}=X_{t,u}\circ X_{s,t}$ is known as the flow property and shows that the population which our model describes has a consistent genealogical structure.

The flow $X$ is time homogeneous and for most of this article we will be interested only in $(X_{0,t})_{t\geq 0}$. We think of each point $x\in K$ being home at time $0$ to a single particle. The function $X_{0,t}$ specifies which particles are coalesced together during $[0,t]$ and where in space the resulting blocks of coalesced particles end up.

\begin{defn}
If $X_{0,t}(K)$ is a finite set then we say the Segregated $\Lambda$-coalescent has come down from infinity at time $t>0$. If $X_{0,t}(K)$ is finite for all $t>0$ then we say the Segregated $\Lambda$-coalescent comes down from infinity.
\end{defn}

Loosely speaking, if our model is to come down from infinity then a large number of coalescence events must occur (i.e.~the rates $r_n$ must be large). On a coalescence event, the coalesced particles jump through space to that events parent location. Thus, if coalescence events occur fast then our particles will also jump fast. The tree structure $(K_w)_{w\in W_*}$ on the segregated space $K$ provides a method of controlling how far particles move when they jump and, as we see in Section \ref{modelformaldef}, is the crucial ingredient that allows us to make sense of our model for any $(r_n)\sw[0,\infty)$.

\subsection{Phases transitions of the Segregated $\Lambda$-coalescent} 
\label{ressec}

For each $t\geq 0$, we say (the individuals which began at) $x$ and $y$ are in the same \textit{block} at time $t$ if $X_{0,t}(x)=X_{0,t}(y)$ and in this case we write $x\sim_t y$. It is easily seen that $\sim_t$ is an equivalence relation on $K$ and we write the equivalence class of $x\in K$ under $\sim_t$ as $[x]_t$. We define
\begin{align*}
\mc{D}_t&=\{x\in K\-[x]_t\text{ is a singleton}\}\\
A_t&=\{[x]_t\-x\in K\sc \mc{D}_t\}.
\end{align*}
Thus, $\mc{D}_t$ is the union of all the singleton blocks and is the \textit{dust component} of our model at time $t$. The set $A_t$ is the set of non-singleton blocks at time $t$. It is easily seen that, for all $t\geq 0$,
\begin{equation}\label{AtDteq}
K=\mc{D}_t\uplus\l(\bigcup A_t\r).
\end{equation}
From the flow property (see Theorem \ref{existthm}) and the definition of $\mc{D}_t$ we have that
\begin{equation}\label{Dmon}
s<t\ra\mc{D}_{t}\sw\mc{D}_{s}.
\end{equation}

As we vary the parameters $(r_n)$ and $\mc{S}$, we say that a phase transition occurs within our model if we see a qualitative change in the behaviour of $\mc{D}_t$ and $A_t$. In particular, we are interested in whether $A_t$ is finite or infinite, and whether $\mc{D}_t$ is empty or non-empty. When $\mc{D}_t$ is non-empty we are also interested in whether $\mc{D}_t$ is $\lambda$-null or has positive measure. Note that $A_0=\emptyset$ and $\mc{D}_0=K$ almost surely.

Let 
\begin{equation}
\tau=\inf\{t\geq 0\-\mc{D}_t=\emptyset\}\label{taudef}
\end{equation}
be the time at which the dust component has been entirely absorbed into the non-singleton blocks. Since $r_N\neq 0$ for some $N\in\N$, at some time $\kappa<\infty$ the exponential clocks associated to all $K_w$ with $|w|=N$ have rung, meaning that $\mc{D}_\kappa=\emptyset$; thus $\tau<\infty$ almost surely.

\begin{theorem}\label{tauthm}
Almost surely, if $t>\tau$ then $\mc{D}_t=\emptyset$ and $A_t$ is finite. Further, if $\tau$ is not almost surely zero then $\P\l[\tau>0\r]=1$.
\end{theorem}

Therefore, we should classify our phases according to the behaviour seen during time $(0,\tau).$ It turns out that our model has five phases, which we now define. Our model is said to be:

\begin{itemize}
\item \textbf{lower subcritical} if: 
\begin{itemize}
\item for all $t\in(0,\infty)$, $\P\l[\tau>t\r]>0$.
\item $\P\l[\forall t\in(0,\tau),\, \lambda(\mc{D}_t)>0\text{ and }|A_t|<\infty\r]=1$.
\end{itemize}

\item \textbf{upper subcritical} if: 
\begin{itemize}
\item for all $t\in(0,\infty)$, $\P\l[\tau>t\r]>0$.
\item $\P\l[\forall t\in(0,\tau),\, \lambda(\mc{D}_t)>0\text{ and }|A_t|=\infty\r]=1$.
\end{itemize}

\item \textbf{semicritical} if: 
\begin{itemize}
\item for all $t\in(0,\infty)$, $\P\l[\tau>t\r]>0$.
\item $\P\l[\forall t\in(0,\tau),\, \lambda(\mc{D}_t)=0\text{ and }|A_t|=\infty\r]=1$.
\end{itemize}

\item \textbf{critical} if there is some (deterministic) $t_0\in(0,\infty)$ such that 
\begin{itemize}
\item $\P\l[\tau<t_0\r]=1$ and for all $t\in(0,t_0)$, $\P\l[t<\tau\r]\in(0,1)$
\item $\P\l[\forall t\in(0,\tau),\, \lambda(\mc{D}_t)=0\text{ and }|A_t|=\infty\r]=1$.
\end{itemize}  
\item \textbf{supercritical} if $\P\l[\tau=0\r]=1$.
\end{itemize}

The quantity $t_0$ is known as the critical time. We are able to completely classify the phases of our model, as follows.

\begin{theorem}\label{phasethm}
Dependent only upon $\mc{S}$ and $(r_n)$, our model is in precisely one of the five phases. In fact, the Segregated $\Lambda$-coalescent $X$ is:
\begin{itemize}
\item lower subcritical if and only if $\sum_n \mc{S}^n r_n<\infty$.
\item upper subcritical if and only if $\sum_n \mc{S}^n r_n=\infty$ and $\sum r_n<\infty$.
\item semicritical if and only if $\sum r_n=\infty$ and $\limsup_n\frac{1}{n}\sum_1^n r_j=0$.
\item critical if and only if $\limsup_n\frac{1}{n}\sum_1^n r_j\in(0,\infty)$
\item supercritical if and only if $\limsup_n\frac{1}{n}\sum_1^n r_j=\infty$.
\end{itemize}
Further, in the critical phase
\begin{equation}\label{t0def}
t_0=\frac{\limsup_{n}\frac{1}{n}\sum_0^nr_j}{\log\mc{S}}.
\end{equation}
\end{theorem}

It follows immediately from Theorem \ref{phasethm} that the Segregated $\Lambda$-coalescent comes down from infinity at $t>0$ if and only if (1) $X$ is supercritical or (2) $X$ is critical and $t\geq t_0$. 

As we commented above, the quantity $\sum\mc{S}^nr_n$ is the total rate of all the exponential clocks involved in the definition of our model. Since each $x\in K$ has $x\in K_w$ for precisely one $w\in W_n$, the quantity $\sum r_n$ is the rate at which coalescence events affect a single point of $K$. It is natural that these two quantities characterise phase transitions. The other quantity which appears in Theorem \ref{phasethm}, $\limsup_n\frac{1}{n}\sum_1^n r_j$, relates to coming down from infinity and will be discussed when we outline our proofs in Section \ref{proofintro}.

As the phase of our model changes, the behaviour of the dust is as expected, in that increasing the intensity of reproduction events reduces the fraction of dust. The lack of monotonicity in the behaviour of the non-singleton blocks is explained as follows. In the lower subcritical phase there are simply not enough events to make anything more than finitely many non-singleton blocks. Then, as the rate increases, there is an intermediate period
where we see a countable infinity of non-trivial blocks. Eventually there are so many reproduction events that they frequently overlap, and we need (a.s.) only finitely many of them to cover $K$.


In the supercritical phase $\tau=0$ almost surely and the question of the behaviour of our model at time $\tau$ is trivial. In the other phases we have the following result.

\begin{cor}\label{taucor}
In all but the supercritical phase, almost surely, $\mc{D}_\tau=\emptyset$ and $A_\tau$ is finite. 
\end{cor}

Theorems \ref{tauthm}, \ref{phasethm} and Corollary \ref{taucor} describe qualitative properties of our model. In particular, they are concerned with behaviour taking place on the tree structure $(K_w)_{w\in W_*}$ and are essentially independent of the metric $D_K$. However, if we examine more detailed properties of our model then $D_K$ does play a role. In Section \ref{nendsec} we describe the behaviour of the Hausdorff dimension of $\mc{D}_t$. Under some quite strong regularity conditions, with $K\sw\R^d$ and $D_K$ equal to the Euclidean metric, we obtain the following result. Let $\chi(t)$ be the Hausdorff dimension of $\mc{D}_t$, conditional on $\mc{D}_t\neq\emptyset$, whenever this is defined.
\begin{itemize}
\item In the lower/upper subcritical and semicritical phases, $\chi(t)=\dim_\mc{H}(K)$ for all $t\geq 0$.
\item In the critical phase, $\chi$ decreases linearly over $[0,t_0]$ with $\chi(0)=\dim_\mc{H}(K)$ and $\chi(t_0)=0$.
\end{itemize}

Proof of the results stated in Sections \ref{modelintrosec} and \ref{ressec} can be found in Sections \ref{modelformaldef}-\ref{nendsec}. An outline of the proofs can be found in Section \ref{proofintro}, but first we will compare our models behaviour to some well known coalescent and population models.


\subsection{Comparison to the $\Lambda$-coalescent} 
\label{compsec}

Let $\Lambda$ be a finite measure on $[0,1]$ and consider the corresponding $\Lambda$-coalescent (see e.g.~\citealt{B2009}). Our model does not feature a spatial analogue of Kingmans coalescent, so we remove the Kingman component of the $\Lambda$-coalescent by specifying that $\Lambda(\{0\})=0$.

If $\Lambda(\{1\})>0$, then the effect on the $\Lambda$-coalescent of the atom at $1$ is as follows; independently of all other mergers, and at rate $\Lambda(\{1\})$, the $\Lambda$-coalescent sees a merger which coagulates the whole population into a single block. Thus, the atom at $1$ serves only to obfuscate the behaviour of the $\Lambda$-coalescent, and is typically removed.

In view of the above paragraph, suppose from now on that $\Lambda(\{1\})=0$. Consider the equivalent of this for the Segregated $\Lambda$-coalescent: we could set $r_0=0$ but if each of the $1$-complexes sees a coalescence event then we face essentially the same issue in that a finite number of coalescence events has covered the whole population. Of course, this could happen with $n$-complexes for any $n\in\N_0$ so in our spatial setting there is no simple way to remove the chance that a finite number of coalescence events may cover the whole population. 

The $\Lambda$-coalescent is said to come down from infinity at time $t>0$ if $\Pi_t$ is a finite set. It is shown in \cite{P1999} that, if the $\Lambda$-coalescent comes down from infinity at some $t>0$ then it does so for all $t>0$, hence the $\Lambda$-coalescent has no equivalent of our critical phase. \citeauthor{P1999}'s proof of this fact uses a zero-one law and relies on the $\Lambda$-coalescent containing only countably many individuals; for this reason the same argument cannot be applied to the Segregated $\Lambda$-coalescent.

Let $\mu^n=\int_0^1x^n\Lambda(dx)$ and consider the case $\mu^{-1}=\infty$. If the $\Lambda$-coalescent does not come down from infinity (e.g. the coalescent of \citealt{BS1998}), then the $\Lambda$-coalescent has empty dust and a countable infinity of non-trivial blocks. This behaviour does not occur in our model. Alternatively, if the $\Lambda$-coalescent does come down from infinity then it has empty dust and finitely many atoms, corresponding to our supercritical phase.

Now consider the case $\mu^{-1}<\infty$. It is shown in \cite{F2012} that, if $\mu^{-2}=\infty$, then the $\Lambda$-coalescent has a countably infinity of non-trivial blocks, and a positive fraction of the population contained within the dust. Similarly, if $\mu^{-2}<\infty$, then it is well known (e.g. Example 19 in \cite{P1999}) that the $\Lambda$-coalescent has only finitely many non-trivial blocks, and has a non-null proportion of the population contained in the dust. Thus the $\Lambda$-coalescent has equivalents of both our upper and lower subcritical phases. 

To summarise the previous few paragraphs, the behaviour seen by our model is that of the $\Lambda$-coalescent, with following modifications.
\begin{enumerate}
\item Playing the role of the cases where $\Lambda(\{1\})>0$, we have the (always positive) probability of having only finitely many non-trivial blocks and no dust.
\item There is no possibility in our model of having a countable infinity of non-trivial blocks and empty dust. This behaviour is replaced by our semicritical phase, in which we see a countably infinity of non-trivial blocks and non-empty null dust.
\item The critical phase appears in between the semi- and supercritical phases.
\end{enumerate}

\subsection{Connections to spatial $\Lambda$-coalescents} 
\label{compspacesec}

Our model is a spatial extension of the $\Lambda$-coalescent. A mean-field version of the $\Lambda$-coalescent has already appeared, in \cite{LS2006}, building on the mean-field version of Kingman's coalescent from \cite{GLW2005}. The model from \cite{LS2006} is referred to in the literature as `the spatial $\Lambda$-coalescent'. 


We refer the reader to \cite{LS2006} for a proper description of their model and restrict ourselves here to outlining some important aspects in which it differs from our own. The model of \citeauthor{LS2006} uses a finite graph $G$ as its geographical space, whereas we use a spatial continuum. The points of its geographical space may be inhabited by more than one block at any time, whereas we permit at most one block of individuals to inhabit a single point of $K$. Further, the blocks of individuals in the model of \citeauthor{LS2006} wander freely around $G$ and may only be coagulated with other blocks that happen to be at the same vertex at the same time. By contrast, blocks in our own model do not move in between mergers, but a single merger involves a non-null proportion of our geographical space. 

Thus, the two models are very different and it is natural to expect different behaviour. In fact, such differences are readily seen. For example, \cite{ABL2010} show that model of \citeauthor{LS2006}, modified slightly so as $G$ is countably infinite and of bounded degree, does not come down from infinity. 

\subsection{Connections to the Spatial $\Lambda$-Fleming-Viot process}
\label{slfvsec}

We now introduce a family of population models that are closely related to the $\Lambda$-coalescent. The dual of the $\Lambda$-coalescent is the $\Lambda$-Fleming-Viot process, constructed in \cite{BL2003} (and also implicitly in \citealt{DK1999}). The $\Lambda$-Fleming-Viot process is a natural generalisation of the classical Fleming-Viot process, which is itself dual to Kingman's coalescent (see e.g.~\citealt{E2011}). The spatial $\Lambda$-Fleming-Viot process ({\slfv}) was introduced in \cite{E2008} as a spatial extension of the $\Lambda$-Fleming-Viot process. 

The {\slfv} process is an infinite system of interacting $\Lambda$-Fleming-Viot processes, with one such process at each site of $\R^d$. See \cite{BEV2010} for a precise description of the {\slfv} process and see \cite{BEV2012} for a survey of recent results concerning (variants of) the {\slfv} process.

Since the dual of the $\Lambda$-Fleming-Viot process is the $\Lambda$-coalescent, and the {\slfv} is a spatial version of the $\Lambda$-Fleming-Viot process, the dual to the {\slfv} process behaves like a spatial version of the $\Lambda$-coalescent. However, as we see below the dual of the {\slfv} process does not display the full range of $\Lambda$-coalescent like behaviour. Our own model, by contrast, shows that coalescent behaviour can be greatly enriched by the introduction of space.

The ancestral lineages of (individuals sampled from a population whose genealogy is described by) the {\slfv} process are compound Poisson processes. Like our own model, these ancestral lineages are only coalesced together when they move through space. However, each such lineage is a compound Poisson process that jumps at finite rate. 

Now, fix $t>0$. Since the geographical of \citeauthor{BEV2010} space is a spatial continuum, we can pick one individual at each point of space and integrate across space (using Fubini's theorem) to see that with positive probability there is a non-null subset of the geographical space, containing an infinite subset of our chosen individuals, all of whom who have not been affected by a reproduction event during $[0,t]$. In the language of coalescents, at all times there is positive probability of the dust being non-empty. Therefore, the model of \cite{BEV2010} does not come down from infinity.

In light of the above paragraph, the reader might wonder why \citeauthor{BEV2012} force their ancestral lineages to be compound Poisson processes rather than using more general L\'{e}vy processes. 
The difficulty stems from an apparent incompatibility between the compensation mechanism (in $\R^d$) of `true' L\'{e}vy processes and the mathematical machinery used to construct the {\slfv} process; we refer the reader to \cite{BEV2010} for a proper discussion. It would not be fair to claim that we have overcome this difficulty in our model. Rather, we choose our state space and reproduction mechanism in such a way as our ancestral lineages can jump at infinite rate, but without the need for L\'{e}vy process style compensation.

Another stochastic model with similar features, a system of interacting Cannings processes on the hierachical group, is investigated by \cite{GHK2012}. They carry out a renormalization procedure corresponding to the hierachical mean field limit, obtaining a detailed description of the limiting object and its behaviour in terms of clustering and coexistance.

\subsection{Outline of the proofs}
\label{proofintro}

Our proof of Theorem \ref{phasethm} comes via several lines of enquiry. Firstly Fubini's theorem produces some useful information and, secondly, the spatial structure of $K$ provides some basic connections between $\mc{D}_t$ and $A_t$. However, the most important contribution comes via a connection between our model and Galton-Watson processes in Varying Environments (GWVEs). A GWVE is a classical Galton-Watson process, with the modification that the offspring distribution of an individual may depend on its generation number. An introduction to GWVEs can be found in \cite{F1972}.

Note that branching structures also play a pivotal role in the study of the $\Lambda$-coalescent, as can be seen in (for example) \cite{BL2000}, \cite{BBC2005} and \cite{BBL2012}. 


For each $w\in W_*$, let $\mc{E}_w$ be the first time $t>0$ at which a coalescence event occurs in the complex $K_w$ (to be clear, the event must occur in precisely $K_w$ and not just inside one of its subcomplexes). We refer to $\mc{E}_w$ as the exponential clock for $K_w$. The connection to our model is as follows. For each $t>0$ and $n\in\N_0$ we define 
\begin{align*}
\mathscr{B}_n^t&=\big\{K_w\-|w|=n \text{ and for all }u\in W_*,\text{ if }K_w\sw K_u\text{ then }\mc{E}_u>t\big\}
\end{align*}
and write $B_n^t=|\mathscr{B}_n^t|$ for the number of elements of $\mathscr{B}^t_n$. Set $\mathscr{B}^t=\cup_{n\in\N_0}\mathscr{B}^t_n$.

It can be seen (in Lemma \ref{Btnlemma}) that $(B^t_n)$ is a GWVE with an $n^{th}$ stage offspring distribution that is Binomial$(S,e^{-tr_{n+1}})$. Note that the case $r_n=c\in(0,\infty)$, where the GWVEs are classical Galton-Watson processes, is part of the critical phase. 

It turns out that the behaviour of $B^t_n=|\mathscr{B}^t_n|$ as $n\to\infty$ is closely connected to the behaviour of $\mc{D}_t$. In fact, Lemma \ref{dustsizebranching} (which appears in Section \ref{dustandgwves}) says that 
\begin{equation}
\mc{D}_t=\bigcap_{n\in\N}\bigcup_{w\in\mathscr{B}^t_n} K_w.\label{iiiii7}
\end{equation}

A GWVE $(B_n)$ is said to be \textit{degenerate} if ${\P\l[\exists n\in\N, B_n=0\r]=1}$. In view of \eqref{iiiii7}, it is important for us to understand when $(B^t_n)$ is degenerate, since in this case $\mc{D}_t=\emptyset$. Conditions equivalent to degeneracy of GWVEs are in general not known, but conditions covering cases including $(B^t_n)$ have been known for some time, in fact since \cite{A1975} and \cite{J1976}. Further conditions were given by \cite{L1992}. The conditions of \cite{J1976} are best suited to our setting and we state them in Lemma \ref{l3}. 

The quantity $\limsup_n\frac{1}{n}\sum_1^n r_j$ (which appeared in Theorem \ref{phasethm}) plays a central role in characterizing degeneracy of $(B^t_n)$. It's precise role is subtle but a partial explanation of the formula is the following. When reproduction events are occurring at a high rate, it becomes common for a larger reproduction event to overwrite the effect of some of the preceding smaller ones. This is borne out by the appearance of the $\limsup$; from Theorem \ref{phasethm} we see that, when $\sum r_n=\infty$, only the $n$-level reproduction events for which $r_n$ is large enough to contribute to $\limsup_n\frac{1}{n}\sum_1^n r_j$ actually take part in determining the phase.  

Formulas similar to \eqref{iiiii7} can be found in the random fractals literature at least as far back as \cite{F1986}, \cite{MW1986} and \cite{G1987} (although these authors did not use branching processes explicitly). Such formulas provide what is now a well known connection between various classes of random fractals and branching processes. In fact, in Section \ref{nendsec} we use a result of \cite{D2009} to calculate the Hausdorff dimension of $\mc{D}_t$, conditional on $\mc{D}_t\neq\emptyset$.

In addition to using GWVEs, in order to understand the behaviour of $\tau$ we will use some techniques from the percolation literature. Many types of branching process, including GWVEs, can be reformulated as an inhomogeneous percolation process on some suitable tree. The relationship is displayed in great generality by, for example, \cite{L1992}. In the case of our GWVEs we have the following.

Consider $W_*$ as the nodes of a regular $\mc{S}$-ary tree, rooted at $\emptyset$ with edge set $\mathbb{E}=\{(w,wi)\-w\in W_*,i\in S\}$. Fix $t>0$. We say that the node $w\in W_*$ is open if $\mc{E}_w>t$ (and closed otherwise) and note that $\mathscr{B}^t$ is the set of points in $W_*$ which are connected to the root node $\emptyset$ via edges with only open nodes at their endpoints. In the language of percolation, $\mathscr{B}^t$ is the open cluster connected to $\emptyset$. Note that each node $w\in W_*$ chooses independently whether it is open or closed. The distribution of $\mc{E}_w$ varies with $|w|$, so in fact $\mathscr{B}^t$ is an inhomogeneous percolation on the $\mc{S}$-ary tree $W_*$.

\section{Existence of the model}
\label{modelformaldef}

In this section we prove the existence of the Segregated $\Lambda$-coalescent. We begin with the definition of our model, formalizing the heuristic description given in Section \ref{modelintrosec}.

Let $\mc{P}$ be the measure on $W_\star\times K$ defined by $\mc{P}(\{w\}\times A)=r_{|w|}\times \lambda_w(A)$
for each $w\in W_*$ and measurable $A\sw K$. Let $(\Omega,\mc{F},\P)$ be a probability space equipped with a Poisson point process $M$ in $\R\times W_*\times K$, of intensity
$dt\otimes \mc{P}(dw, dy)$,
where $dt$ denotes Lebesgue measure. For (measurable) $I\sw\R$, $V\sw W_*$ and $A\sw K$ define
\begin{align*}
M_{I}&=\{(t,w,y)\in M\- t\in I\},\hspace{1pc}M_{I\times V}=\{(t,w,y)\in M\- t\in I, w\in V\}.
\end{align*}
Note that, in terms of $M$, $\mc{E}_w=\inf\{s>0\-M_{(0,s]\times\{w\}}\neq 0\}$.

It will be to our advantage to have some almost sure properties of $M$ as `sure' properties of $M$. In particular, by standard properties of Poisson point processes (see e.g.~\cite{K1993}), with probability one: 
\begin{enumerate}
\item[(a)] For all $k\in\N$ and $n\in \N\cup\{0\}$, $M_{[-k,k]\times W_n}$ is finite.
\item[(b)]For all $u\in\R$, $M_{\{u\}}$ is finite. 
\end{enumerate}
With slight abuse of notation, we simply redefine $M$ so as (a) and (b) hold for all $\omega \in\Omega$.

Let us examine Figure \ref{lintracfig} and determine which coalescence events, according to our heuristic, actually influence the final position of the lineages. Consider an event $(s,w,y)$ in a complex $K_w$ of level $|w|=n$ at time $s\in (0,t)$. The event had no effect on the position at time $t$ of any of the lineages if:
\begin{itemize}
\item There was an event $(s',w',y')$ such that $s<s'<t$ and $K_w\sw K_{w'}$.
\item Or, the final event $(s',w',y')$ such that $0<s'<s$ and $K_w\subset K_{w'}$ had $y'\notin K_w$.
\end{itemize}
Hence, to work out where $x\in K$ should be mapped to over time $[s,t]$ we need only consider the following sequence of events. 

First, look for the final level $0$ event during $(s,t]$ which affected the point $x$. If we find one, say $(u_1,w_1,p_1)$, we then look for the final level $1$ event which was after time $u_1$ and affected $p_1$, and so on. If at any point we don't find a level $n$ event, we simply move up to level $n+1$ and look there. In symbols:

\begin{defn}\label{Eseq}
Fix $(x,s,t)$ with $x\in K$ and $s\leq t$. Let $(u_0,w_0,p_0)=(s,\emptyset, x)$ and set $N_0=-1$. For as long as $N_{m+1}<\infty$ define inductively a pair of (possibly finite) sequences by
\begin{align}
N_{m+1}&=\min\{n> N_m\-\exists (u,w,p)\in {M}_{(u_m,t]\times W_{n}}\text{ such that }p_m\in K_{w}\}\notag\\
E_{m+1}&=(u_{m+1},w_{m+1},p_{m+1})\notag\\
&\hspace{.5pc}\text{ where } u_{m+1}=\max\{u\in(u_m,t]\-(u,w,p)\in {M}_{(u_m,t]\times W_{N_{m+1}}}\text{ and }p_m\in K_{w}\}.\notag
\end{align}
\end{defn}

Define $(E_m)=\{(u_i,w_i,p_i)\-i=1,2\ldots\}$, $(N_m)=\{N_i\-i=1,2\ldots\}$ and note that we do not include the term $i=0$. Define $(u_m),(w_m),(p_m)$ similarly. A graphical demonstration of Definition \ref{Eseq} can be seen in Figure \ref{lintracfig}.

Since (a) and (b) hold for all $\omega\in\Omega$, $(E_m)$ and $(N_m)$ are well defined for all $\omega\in\Omega$. The (finite or infinite) sequence of coalescence events $(E_m)$ contains the only events which affected the final position of the lineage that started from $x$ and moved during time $(s,t]$. 

\begin{notation}[Continuation of Definition \ref{Eseq}]\label{Eseq2} 
The sequence $(E_m)$ depends on $x,s$ and $t$. When we need this distinction (which will be most of the time) we write $$E_m^{x,s,t}=(u^{x,s,t}_m,w^{x,s,t}_m, p^{x,s,t}_m).$$ We write $K_{w^{x,s,t}_m}=K^{x,s,t}_{w_m}$. Occasionally, if $s$ and $t$ are both clear from the context then we may omit them as superscripts and write $E_m^x=(u^x_m,w^x_m,p^x_m)$, $K^x_{w_m}=K_{w^x_m}$.
\end{notation}

We will shortly define $X_{s,t}$ using the language above, but first we need to note a technical point that concerns the following lemma.

\begin{lemma}\label{pmconv}
If the sequence $(E_m^{x,s,t})$ is infinite then the sequence $(p^{x,s,t}_m)$ is convergent. 
\end{lemma}
\begin{proof}
Suppose $(E^{x,s,t}_m)$ is infinite. By Definition \ref{Eseq}, $|w^{x,s,t}_m|$ is $\N$ valued and strictly increasing, so $|w^{x,s,t}_m|\to\infty$ as $m\to\infty$. Thus $K^{x,s,t}_{w_m}$ is a decreasing sequence of sets. Note that $p^{x,s,t}_m\in K_{w_m}$. By {\Kb}, $(p^{x,s,t}_m)$ is a Cauchy sequence so, by completeness of $K$, $(p^{x,s,t}_m)$ is convergent.
\end{proof}

Suppose for a moment that all complexes $K_w$ of $K$ are closed and recall our heuristic definition from Section \ref{modelintrosec}. Then, it makes intuitive sense that reproduction events occurring in complexes $K_{w'}\sw K_w$ cannot move particles in the flow from within $K_w$ into $K\sc K_w$. However, if some $K_{w}$ is not closed then it might be the case that an infinite sequence $(u'_m,w'_m,p'_m)$ of events, with $K_{w'_m}\sw K_w$, could have $\lim p'_m\notin K_w$, because it could be that $\lim p'_m\in \overline{K}_w\sc K_w$. In this case our construction would run into a serious problem; the flow property $X_{s,v}=X_{t,v}\circ X_{s,t}$ would fail. 

To address this issue, we introduce the set
$$\mc{O}=\bigcup\limits_{n\in\N}\bigcup_{\substack{w,w'\in W_n\\w\neq w'}}\overline{K_w}\cap K_{w'}.$$
If $\mc{O}=\emptyset$ then we say $K$ is \textit{completely segregated}. Recall Examples \ref{cantorex} and \ref{01ex} and note that Example \ref{cantorex} is completely segregated but Example \ref{01ex} is not. 

Until further notice, which means until Section \ref{Osec}, we will assume that $K$ is completely segregated i.e.~$\mc{O}=\emptyset$. Then, in Section \ref{Osec} we will discuss the modifications which are necessary to make our arguments work in the case $\mc{O}\neq\emptyset$.

The Segregated $\Lambda$-coalescent is the process $(X_{s,t})_{s\leq t}$ defined as follows.
\begin{equation}\label{pp3}
X_{s,t}(x)=\Bigg\{
\begin{array}{ll}
x & \text{if }(E^{x,s,t}_m)\text{ is empty}\\
p^{x,s,t}_\mc{M} & \text{if }(E^{x,s,t}_m)=(E^{x,s,t}_m)_{m=1}^\mc{M}\text{ for }\mc{M}\in\N\\
\lim\limits_{m\to\infty} p^{x,s,t}_m & \text{if }(E^{x,s,t}_m)\text{ is infinite}.
\end{array}
\end{equation}
In fact, in Section \ref{Osec} we will see that when $\mc{O}\neq\emptyset$ there is a $\P$-null set on which it makes sense to define $X$ differently to \eqref{pp3}. Until then we use \eqref{pp3} for all $\omega\in\Omega$. We now record some results which use the fact that $\mc{O}=\emptyset$.

\begin{lemma}\label{Kwclosed}
Every subcomplex $K_w$ is a closed subset of $K$. 
\end{lemma}
\begin{proof}
Let $n\in\N_0$ and $w\in W_n$. Note that by {\Ka} we have $K=\biguplus_{w\in W_n}K_w$. Hence, if $x\in\overline{K_w}$ then we must have $x\in K_{w'}$ for some $w'\in W_n$. If $x\notin K_w$ then we would have $x\in \overline{K_w}\cap K_u{w'}$, which contradicts $\mc{O}=\emptyset$.
\end{proof}

\begin{remark}\label{K4im}
By Lemma \ref{Kwclosed}, if $\mc{O}=\emptyset$ then {\Kc} holds automatically. 
\end{remark}

\begin{lemma}\label{yinKst2}
For all $s<t$, all $x\in K$ and all $n\in\N$, if $(E_m^{x,s,t})$ is infinite then $\lim\limits_{m\to\infty}p_m^{x,s,t}\in K_{w_n}^{x,s,t}$.
\end{lemma}
\begin{proof}
We noted in the proof of Lemma \ref{pmconv} that $K^{x,s,t}_{w_m}$ is a decreasing sequence of sets. Hence $p_m^{x,s,t}\in K_{w_m}^{x,s,t}\sw K^{x,s,t}_{w_n}$ for all $m\geq n$. By Lemma \ref{Kwclosed}, $K^{x,s,t}_{w_l}$ is closed for all $l\in\N$ and hence $\lim\limits_{m\to\infty}p_m^{x,s,t}\in K_{w_n}^{x,s,t}$.
\end{proof}




\begin{lemma}\label{xeqcomp}
For all $x\in K\sc{\mc{D}_t}$ and $t>0$, 
$[x]_t=K^{x,0,t}_{w_1}$. 
\end{lemma}
\begin{proof}
If $x\notin\mc{D}_t$ then for some $y\in K$ we have $y\neq x$ and $X_{0,t}(x)=X_{0,t}(y)$. It follows from Definition \ref{Eseq}, that $E^{x,s,t}\neq\emptyset$ in this case. Note that if $y\in K^{x,0,t}_{w_1}$ then $X_{0,t}(y)=X_{0,t}(x)$, so $K^{x,0,t}\sw [x]_t$. Now, suppose $z\in K\sc K^{x,0,t}_{w_1}$. If $(K\sc K^{x,0,t}_{w_1})\cap (K\sc K^{z,0,t}_{w_1})\neq \emptyset$ then by {\Ka} either (1) $(K\sc K^{x,0,t}_{w_1})\sw (K\sc K^{z,0,t}_{w_1})$ or (2) $(K\sc K^{z,0,t}_{w_1})\cap (K\sc K^{x,0,t}_{w_1})$. 

If (1) holds then $x\in K^{z,0,t}_{w_1}$ so by Definition \ref{Eseq} we must have $N^{x,0,t}_1\geq N^{z,0,t}_1$, which means $|w_1^{x,0,t}|\geq|w^{z,0,t}_1|$. But combined with (1) and {\Ka} this implies that $K\sc K^{x,0,t}_{w_1}= K\sc K^{z,0,t}_{w_1}$, which contradicts the definition of $z$.

Similarly, if (2) holds then $z\in K^{x,0,t}_{w_1}$ so by Definition \ref{Eseq} we must have  $N^{z,0,t}_{w_1}\geq N^{x,0,t}_{w_1}$, which means $|w_1^{z,0,t}|\geq |w_1^{x,0,t}|$. Combined with (2) and {\Ka} this implies that $K\sc K^{x,0,t}_{w_1}= K\sc K^{z,0,t}_{w_1}$, which again contradicts the definition of $z$.

Thus, we must have $(K\sc K^{x,0,t}_{w_1})\cap (K\sc K^{z,0,t}_{w_1})= \emptyset$. From this, Lemma \ref{yinKst2} implies that $X_{0,t}(x)\neq X_{0,t}(z)$ so as $z\notin [x]_t$. Since $z$ was arbitrary, $[x]_t=K^{x,0,t}_{w_1}$.
\end{proof}

The remainder of Section \ref{modelformaldef} is concerned with proving Theorem \ref{existthm} and establishing some regularity results that we require in Section \ref{phaseproofsec}. Readers who are more interested in proving Theorems \ref{tauthm}, \ref{phasethm} and Corollary \ref{taucor} may wish to omit Sections \ref{existproof}-\ref{meassec} and move straight on to Section \ref{phaseproofsec}.




\subsection{Proof of Theorem \ref{existthm}}
\label{existproof}


The proof comes in three parts, which correspond to the bullet points in the statement of Theorem \ref{existthm}. The first part is a careful check of the flow property. 


\textsc{Part 1.} Let $s\leq t\leq v$ and fix $x\in K$. Write $y=X_{s,t}(x)$. When necessary we will emphasise the dependence with $y=y^{x,s,t}$. We divide into three cases. 

\textbf{If $N^{x,s,t}_1=N^{y,t,v}_1=\infty$:, then} for all $x\in K$, $X_{s,t}(x)=x=y$ and $X_{t,v}(y)=y$. Since $x=y$, $N_{1}^{x,s,v}=\infty$ and $X_{s,t}(x)=X_{t,v}(x)=X_{s,v}(x)$, so $X_{t,v}(X_{s,t}(x))=X_{s,v}(x)$.

\textbf{If $N^{x,s,t}_1=\infty$ and $N^{x,t,v}_1<\infty$, then} $X_{s,t}(x)=x=y$ and hence we must have $u_1^{x,s,t}\geq v$. Hence $(E_m^{x,t,v})=(E_m^{x,s,v})$ and thus $X_{t,v}(X_{s,t}(x))=X_{t,v}(x)=X_{s,t}(x)$. 


\textbf{If $N^{x,s,t}_1<\infty$, then} we have $N^{x,s,v}_1<\infty$. Let 
$$\mc{C}^{s,t,v}=\{x\in K\-\exists m, u^{x,s,v}_m\geq t\}.$$

If $x\notin \mc{C}^{s,t,v}$ then $u_m^{x,s,v}< t$ for all $m$, so from the definitions we have $(E_m^{x,s,t})=(E^{x,s,v}_m)$. Hence $X_{t,v}(x)=X_{s,t}(x)=y$. Suppose it was the case that ${(u_1^{y,t,v},w_1^{y,t,v},p_1^{y,t,v})\in (E^{y,t,v})}$. Note $y\in K_{w_1}^{x,s,v}$ so we must have $(u_1^{y,t,v},w_1^{y,t,v},p_1^{y,t,v})\in (E^{x,s,v}_m)$, which is a contradiction since $u_1^{y,t,v}\geq t$. Hence $(E_m^{y,t,v})$ is empty, and $X_{t,v}=\iota$. Thus, $X_{t,v}(X_{s,t}(x))=X_{s,v}(x)$.  

If $x\in \mc{C}^{s,t,v}$, let $$\mc{M}=\max\{m\- u^{x,s,v}_m< t\}$$ (which is well defined since $(u_m^{x,s,t})$ is strictly increasing), and from the definitions note that $(E_m^{x,s,t})_1^{\mc{M}}=(E_m^{x,s,v})_1^{\mc{M}}$. 

By definition of $\mc{M}$ we have $u^{x,s,v}_{\mc{M}+1}\geq t$ and, since $p^{x,s,v}_\mc{M}=p^{x,s,t}_\mc{M}$, it holds that $N_{\mc{M}+1}^{x,s,v}\leq N_{\mc{M}+1}^{x,s,t}$. Hence $K^{x,s,t}_{w_{\mc{M}+1}}\sw K_{w_{\mc{M}+1}}^{x,s,v}$. By definition, $p^{x,s,t}_\mc{M}\in K^{x,s,t}_{w_{\mc{M}+1}}$ and, we have also that $(K^{x,s,t}_{w_m})$ is decreasing. We have already commented that $K^{x,s,t}_{w_{\mc{M}+1}}\sw K_{w_{\mc{M}+1}}^{x,s,v}$, so it follows from Lemma \ref{yinKst2} that $y^{x,s,t}\in K_{w_{\mc{M}+1}}^{x,s,v}$.

Since both $y$ and $p^{x,s,v}_{\mc{M}}$ are elements of $K^{x,s,v}_{w_{\mc{M}+1}}$, there is no $(u,w,p)\in (E_m^{y,t,v})$ such that $|w|<N_{\mc{M}+1}^{x,s,v}$ - such a $(u,w,p)$ would also have featured in $(E_m^{x,s,v})$, which contradicts the definition of $\mc{M}$. Also, there are no $(u,w,p)\in (E^{y,t,v}_m)$ such that $u>u_{\mc{M}+1}^{x,s,v}$ and $y\in K_w$ - such a $(u,w,p)$ would feature in $(E^{x,s,v}_m)$, which contradicts the definition of $u_{\mc{M}+1}^{x,s,v}$. 

Combining the results of previous two sentences, $(u^{x,s,v}_{\mc{M}+1},w^{x,s,v}_{\mc{M}+1},p^{x,s,v}_{\mc{M}+1})=(u^{y,t,v}_{1},w^{y,t,v}_1,p^{y,t,v}_1)$. Hence $(E_m^{x,s,v})_{m\geq \mc{M}+1}=(E^{x,s,v}_k)_{k\geq 1}$, which implies that $X_{t,v}(y)=X_{s,v}(x)$. This completes the third case.

Since $x$ and $\omega$ were arbitrary, in all cases we have that 
for all $\omega\in\Omega$, $X_{s,v}=X_{t,v}\circ X_{s,t}.$

\textsc{Part 2:} Let $s_1<t_1\leq s_2<t_s$. Since $M_{(s_1,t_1]}$ and $M_{(s_2,t_2]}$ are independent, and the construction of $X_{s,t}$ depended only on ${M}_{s,t}$, it follows immediately that $X_{s_1,t_1}$ and $X_{s_2,t_2}$ are independent.

\textsc{Part 3:} Let $s_1<t_1$ and $s_2<t_2$ with $t_1-s_1=t_2-s_2$. Then $M_{(s_1,t_1]}$ and $\big\{\big(u-(t_2-t_1),w,p\big)\- (u,w,p)\in M_{(s_2,t_2]}\big\}$ are identical in distribution, from which it follows that $X_{s_1,t_1}$ and $X_{s_2,t_2}$ are also identical in distribution. 

\textsc{Part 4:} Let $t\in \R$. Note that $(t,t]$ is empty, so as by Definition \ref{Eseq} we have that $(E^{x,t,t})$ is empty for all $x\in K$. Thus $X_{t,t}$ is the identity function.

\subsection{Regularity}
\label{meassec}

Recall that our underlying Poisson point process $\Pi$ is defined on the probability space $(\Omega,\mc{F},\P)$. 
Throughout this section we denote the dependence on $\omega\in\Omega$ of $X$ by writing $X_{0,t}(\cdot)(\omega)$. Let $\mathbb{B}(K)$ denote the Borel $\sigma$-algebra on $K$ and recall that $D_K$ denotes the metric on $K$.



\begin{lemma}\label{Ksep}
$K$ is separable.
\end{lemma}
\begin{proof}
By Lemma \ref{Kdlemma}, each $K_w$ is non-empty. For each $K_w$ pick some point $x(w)\in K_w$ and define $\mathbb{D}=\{x(w)\-w\in W^*\}$. Note that $\mathbb{D}$ is countable.

Let $O$ be an open set of $K$. Since $K$ is a metric space, for some $r>0$ and $y\in K$, $B_r(y)\sw O$. For $y\in K$ and $n\in\N$ let $K_{(y,n)}$ be the unique complex $K_w$ of $K$ such that $|w|=n$ and $y\in K_w$. By {\Kb}, for some $n\in\N$ we have $r_n<r/2$, so as $K_{(y,n)}\sw B_r(y)\sw O$. By definition of $x(w)$ there is $w\in W_*$ such that $x(w)\in K_{(y,n)}\sw\mc{O}$. Hence $\mathbb{D}$ is a countable dense subset of $K$.
\end{proof}

\begin{lemma}\label{borelK}
The Borel $\sigma$-algebra on $K$ is generated by $(K_w)_{w\in W_*}$.
\end{lemma}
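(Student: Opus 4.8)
The plan is to prove the two inclusions $\sigma((K_w)_{w\in W_*})\sw\mathbb{B}(K)$ and $\mathbb{B}(K)\sw\sigma((K_w)_{w\in W_*})$ separately. The first is immediate: by Definition \ref{segspacedef} every $K_w$ is a measurable subset of $K$, hence Borel, so the $\sigma$-algebra they generate is contained in $\mathbb{B}(K)$. All the real work is in the reverse inclusion, for which it suffices to show that every open set $U\sw K$ lies in $\sigma((K_w))$, since the open sets generate the Borel $\sigma$-algebra.

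First I would observe that $W_*=\bigcup_n W_n$ is countable, being a countable union of the finite sets $W_n=S^n$. This is what will later allow me to assemble open sets from complexes using only countable operations. Next, the key geometric input is \Kb\ together with the level-$n$ decomposition \eqref{Kdisjcomplexes}: for each $x\in K$ and each $n$ there is a unique $w\in W_n$ with $x\in K_w$, and $\diam(K_w)\leq L_{|w|}=L_n$. Since $L_n\to 0$, the complexes containing a fixed point shrink down to that point.

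The central claim is then that, for any open $U\sw K$,
\begin{equation*}
U=\bigcup\{K_w\- w\in W_*,\ K_w\sw U\}.
\end{equation*}
The inclusion $\supseteq$ is trivial. For $\subseteq$, given $x\in U$ I choose $\epsilon>0$ with $B_\epsilon(x)\sw U$, and then $n$ large enough that $L_n<\epsilon$; the complex $K_w$ at level $n$ containing $x$ has diameter at most $L_n<\epsilon$, so $K_w\sw B_\epsilon(x)\sw U$ while $x\in K_w$. Thus every point of $U$ lies in some complex contained in $U$, which proves the claim. Because $W_*$ is countable, the right-hand side is a countable union of sets drawn from $(K_w)$, so $U\in\sigma((K_w))$. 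Hence $\mathbb{B}(K)\sw\sigma((K_w))$, and combined with the easy inclusion the two $\sigma$-algebras coincide.

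I do not expect a genuine obstacle here; the only point requiring care is that the $K_w$ need \emph{not} be open (they are open only in the completely segregated case, by Lemma \ref{Kcompseg}), so one cannot simply assert that they form a basis of the topology. The argument sidesteps this by writing $U$ as a union of complexes that happen to lie inside $U$, an operation that needs nothing more than measurability of the $K_w$ together with the shrinking-diameter property \Kb.
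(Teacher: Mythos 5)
Your proof is correct and is essentially the paper's own argument: both rest on the identity $U=\bigcup\{K_w\,;\,w\in W_*,\ K_w\subseteq U\}$ for open $U$, proved via \Kb{} and the level-$n$ decomposition, together with countability of $W_*$. The only cosmetic difference is that the paper first reduces to open balls using separability before applying this decomposition, whereas you apply it directly to arbitrary open sets, which is if anything slightly more streamlined.
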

\begin{proof}
By Definition \ref{segspacedef}, each $K_w$ is measurable, so it is clear that $\sigma(K_w\-w\in W_*)\sw\mathbb{B}(K)$. We will now prove the reverse inclusion.

By Lemma \ref{Ksep}, $K$ is separable, hence any open subset of $K$ can be written as a union of only countably many open balls of $K$. Hence $\mathbb{B}(K)$ is generated by the open balls of $K$. So the proof is complete if we can show that any open ball of $K$ is contained in $\sigma(K_w\-w\in W_*)$.

To this end, let $B_r(x)=\{y\in K\-|y-x|<r\}$ be a fixed but arbitrary open ball in $K$. By {\Ka}, for each $y\in K$ and $n\in\N$, let $K_{(y,n)}$ be the unique complex $K_w$ of $K$ such that $|w|=n$ and $y\in K_w$. Note that 
\begin{equation}\label{pppp1001}
B_r(x)\supseteq\bigcup\{K_w\-w\in W_*, K_w\sw B_r(x)\}
\end{equation}
is tautologically true, and, since $W_*$ is countable, the union on the right is countable. Now, suppose that $y\in B_r(x)$. Since $B_r(x)$ is open, for some $\epsilon>0$ we have $B_y(\epsilon)\sw B_r(x)$. By {\Kb}, for some sufficiently large $n\in\N$ we have 
$K_{w(y,n)}\sw B_\epsilon(y)\sw B_r(x).$ 
However, this implies that $K_{w(y,n)}\in \{K_w\-w\in W_*, K_w\sw B_r(x)\}$, so as $y\in\bigcup\{K_w\-w\in W_*, K_w\sw B_r(x)\}$. Hence, in fact 
\eqref{pppp1001} is an equality,
and thus $B_r(x)\in \sigma(K_w\-w\in W_*)$. 
\end{proof}

\begin{lemma}\label{xwmeas}
For each $s<t$, $(x,\omega)\mapsto X_{s,t}(x)(\omega)$ is a measurable function from $K\times\Omega\to K$. 
\end{lemma}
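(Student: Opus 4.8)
The plan is to realise $X_{s,t}$ as a pointwise limit of ``truncated'' flows that are driven by only finitely many reproduction events, and for which joint measurability is transparent. I fix $s<t$ throughout. Since $\mathbb{B}(K)\otimes\mc{F}$ contains both $K\times\mc{A}$ and $K\times(\Omega\sc\mc{A})$, and on the latter $X_{s,t}=\iota$ is trivially jointly measurable, it suffices to establish joint measurability on $K\times\mc{A}$. I also record that $K$ is a compact, hence separable, metric space: by {\Ka} and {\Kb} it is covered, for each $n$, by the finitely many complexes $\{K_w\-w\in W_n\}$, each of diameter at most $L_{|w|}\to 0$.

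For each $n\in\N$ let $X^{(n)}_{s,t}$ be the flow obtained by running the construction of Definition \ref{Eseq}, but restricting the search at every stage to reproduction events of level at most $n$; equivalently (by the discussion preceding Definition \ref{Eseq}), $X^{(n)}_{s,t}$ is the flow driven only by the points of $\Pi_{(s,t]\times\{w\-|w|\leq n\}}$. The total rate of these events is $\sum_{j=0}^n|S|^jr_j<\infty$, so this point set is almost surely finite. Ordering its points by time as $(u_1,v_1,q_1),\ldots,(u_J,v_J,q_J)$ (the times being a.s.\ distinct on $\mc{B}$), one has
\begin{equation*}
X^{(n)}_{s,t}=\Phi_{v_J,q_J}\circ\cdots\circ\Phi_{v_1,q_1},
\end{equation*}
where $\Phi_{v,q}\colon K\to K$ is the single-event map given by $\Phi_{v,q}(y)=q$ for $y\in K_v$ and $\Phi_{v,q}(y)=y$ for $y\notin K_v$. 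To see joint measurability of $(x,\omega)\mapsto X^{(n)}_{s,t}(x)(\omega)$, note first that $(y,v,q)\mapsto\Phi_{v,q}(y)$ is $\mathbb{B}(K)\otimes 2^{W_*}\otimes\mathbb{B}(K)$-measurable: since $W_*$ is countable and each $K_v$ is measurable, the set $\{(y,v)\-y\in K_v\}$ is measurable. Decomposing $\mc{A}$ according to the value of $J$, on each event $\{J=j\}$ the marked times $(u_i,v_i,q_i)_{i=1}^j$ are measurable functions of $\omega$ by the standard representation of a finite Poisson point process, and $X^{(n)}_{s,t}$ is then a finite composition of maps each jointly measurable in its argument and in its measurable, $\omega$-dependent parameters. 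Hence $X^{(n)}_{s,t}$ is jointly measurable on $K\times\mc{A}$.

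It remains to show $X^{(n)}_{s,t}(x)(\omega)\to X_{s,t}(x)(\omega)$ for every $x\in K$ and $\omega\in\mc{A}$. By construction the truncated sequence of events coincides with the initial segment $E^{x,s,t}_1,\ldots,E^{x,s,t}_{j_n}$ of the full sequence, where $j_n=\max\{m\-N^{x,s,t}_m\leq n\}$, so that $X^{(n)}_{s,t}(x)=p^{x,s,t}_{j_n}$ (and $=x$ when $j_n=0$). If $(E^{x,s,t}_m)$ is empty then $x\in\mc{D}_t$ and $X^{(n)}_{s,t}(x)=x=X_{s,t}(x)$ for all $n$; if it is finite with $M$ terms then $j_n=M$ for $n\geq N^{x,s,t}_M$, giving $X^{(n)}_{s,t}(x)=p^{x,s,t}_M=X_{s,t}(x)$; and if it is infinite then $j_n\to\infty$ (as $N^{x,s,t}_m\uparrow\infty$ by Lemma \ref{singlelinlemma}), so $X^{(n)}_{s,t}(x)=p^{x,s,t}_{j_n}\to\lim_mp^{x,s,t}_m=X_{s,t}(x)$ by \eqref{pp3}. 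In every case the convergence holds, and a pointwise limit of $\mathbb{B}(K)\otimes\mc{F}$-measurable maps into the metric space $K$ is $\mathbb{B}(K)\otimes\mc{F}$-measurable; hence $X_{s,t}$ is jointly measurable on $K\times\mc{A}$, as required.

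The main obstacle is the middle step: making rigorous the joint measurability of the finite composition $X^{(n)}_{s,t}$, that is, the measurable enumeration of the driving finite Poisson point process and the measurable dependence of the composition on the resulting marks. Everything else is either routine bookkeeping or a direct reading of the construction \eqref{pp3} and Lemma \ref{singlelinlemma}.
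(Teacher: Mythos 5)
Your proof is correct in outline but takes a genuinely different route from the paper's. The paper argues directly: it fixes a complex $K_w$, decomposes the preimage as $\{(x,\omega)\- X_{s,t}(x)(\omega)\in K_w\}=\bigcup_{v\in W_*}K_v\times\{\omega\- X_{s,t}(K_v)\sw K_w\}$, expresses the $\omega$-events in terms of $\Pi$, and then invokes Lemma \ref{borelK} (the complexes generate $\mathbb{B}(K)$) to conclude. You instead approximate $X_{s,t}$ by truncated flows $X^{(n)}_{s,t}$ driven by the a.s.\ finitely many events of level at most $n$, prove joint measurability of each truncation via a finite composition of single-event maps with measurably enumerated marks, and pass to the pointwise limit using the three cases of \eqref{pp3} and Lemma \ref{singlelinlemma}. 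The paper's argument is shorter and exploits the special structure of $K$ (preimages of atoms are complexes, and complexes generate the Borel sets); yours is more robust and self-contained in that it never needs Lemma \ref{borelK} and recycles the convergence analysis already done for the construction. The one soft spot is your appeal to ``the discussion preceding Definition \ref{Eseq}'' to identify the truncated Definition-\ref{Eseq} recursion with the time-ordered composition $\Phi_{v_J,q_J}\circ\cdots\circ\Phi_{v_1,q_1}$: the paper only motivates this equivalence informally and never proves it, and you use the composition form for measurability but the recursion form for convergence, so you genuinely need both descriptions to agree. This is a finite, purely combinatorial induction on the number of events (in the spirit of Part 1 of the proof of Theorem \ref{existthm}), and it should be recorded explicitly; with that supplied, the argument is complete.
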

\begin{proof}
The definition of our model is translation invariant across time, so it suffices to consider the case $s=0$. For $v\in W_*$ let $\mc{W}(v)=\{w'\in W_*\-K_{v}\sw K_{w'}\}.$ Fix $w\in W_*$. We note
\begin{align*}
\{(x,&\omega)\in K\times \Omega\- X_{0,t}(x)(\omega)\in K_w\}\notag\\
&=\bigcup\limits_{v\in W_*}K_v\times \{\omega\in\Omega\-X_{0,t}(K_v)\sw K_w\}\notag\\
&=\bigcup\limits_{v\in W_*}K_v\times \bigg[\{\omega\in\Omega\sc\mc{A}\-K_u\sw K_v\}\uplus\Big(\mc{A}\,\cap\\
&\hspace{1pc}\big\{\omega\in\Omega\-\nexists(u',w',p')\in\Pi_{(0,t]}, K_{v}\sw K_{w'}, p'\notin K_w, \Pi_{[u',t]\times\mc{W}(v)}=\emptyset\big\}\Big)\bigg]
\end{align*}
Note that $\{\omega\in\Omega\sc\mc{A}\-K_u\sw K_v\}$ is either empty or equal to the measurable set $\Omega\sc\mc{A}$. From the representation above, it follows that 
$\{(x,\omega)\in K\times \Omega\- X_{0,t}(x)(\omega)\in K_w\}$ is an element of the product $\sigma$-algebra on $K\times \Omega$. Lemma \ref{borelK} completes the proof.
\end{proof}

\begin{remark}[On particle paths]\label{Xmeasurability} 
For each $x\in K$, $t\mapsto X_{0,t}(x)$ is a {\cadlag} function, and $\omega\mapsto X_{0,\cdot}(x)$ is a random variable in the space of {\cadlag} $K$-valued paths (with the usual weak topology). 
Proof of this result is no more than a long exercise in manipulating the definitions and is not included in this article; see \cite{F2012}.
\end{remark}



\section{Proof of the phase transitions}
\label{phaseproofsec}

In this section we prove the results that were stated in Section \ref{ressec}, namely Theorems \ref{tauthm}, \ref{phasethm} and Corollary \ref{taucor}. We begin with some results based on Fubini's theorem and the spatial structure of $K$.

\begin{lemma}\label{f2}If $\sum r_n=\infty$ then $\P\l[\lambda_t(\mc{D}_t)=0\r]=1$.
\end{lemma}
\begin{proof}
Fix $t>0$. By Fubini's theorem, which applies by Lemma \ref{xwmeas},
\begin{align}\label{eq0a1}
\E\l[\lambda(\mc{D}_t)\r]&=\E\l[\int_K\1\{x\in \mc{D}_t\}\lambda(dx)\}\r]=\int_{K} \P\l[x\in\mc{D}_t\r]\lambda(dx).
\end{align}
The rate at which each point $x\in K$ is affected by reproduction events is $\sum_{n\in\N_0} r_n$. Hence, if $\sum r_n=\infty$ then each $x\in K$ has almost surely been involved in some reproduction event during $[0,t]$ and thus $\P\l[x\in\mc{D}_t\r]=0$.  By \eqref{eq0a1}, $\P\l[\lambda(\mc{D}_t)=0\r]=1$.
\end{proof}

\begin{lemma}\label{finatomsdust}
Suppose $A_t$ is finite. Then either $\mc{D}_t=\emptyset$ or, for some $w\in W_*$, $K_w\sw \mc{D}_t$. In the latter case, $\lambda_t(\mc{D}_t)>0$.
\end{lemma}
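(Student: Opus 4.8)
The plan is to use Lemma \ref{lambdastar} to recognise that $K\sc\mc{D}_t$ is, under the hypothesis, a \emph{finite} union of complexes of $K$, and then to exploit the laminar (nested-or-disjoint) structure of the family $(K_w)_{w\in W_*}$ to conclude that its complement $\mc{D}_t$ is itself a union of complexes at a common level. Throughout I work on the almost sure event $\omega\in\mc{A}$, on which $\mc{D}_t$, $\lambda^*_t$ and the conclusions of Lemma \ref{lambdastar} are available; this is what supplies the ``almost surely'' qualifier.

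\textbf{Step 1: $K\sc\mc{D}_t$ is a finite union of complexes.} By Lemma \ref{lambdastar}(4), every $x\in K\sc\mc{D}_t$ satisfies $X_{0,t}(x)\in\mathscr{A}_t$, so $K\sc\mc{D}_t=\bigcup_{y\in\mathscr{A}_t}X_{0,t}^{-1}(y)$. By Lemma \ref{lambdastar}(3), each preimage $X_{0,t}^{-1}(y)$ equals some complex $K^{x,0,t}_{w_1}$, i.e.\ a set of the form $K_{v}$ with $v\in W_*$; moreover distinct atoms give disjoint preimages. Since by assumption $\mathscr{A}_t$ is finite, I obtain pairwise disjoint complexes $K_{v_1},\ldots,K_{v_N}$ with $K\sc\mc{D}_t=\biguplus_{i=1}^N K_{v_i}$, and hence $\mc{D}_t=K\sc\biguplus_{i=1}^N K_{v_i}$.

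\textbf{Step 2: reduction to a single level.} First I would record the laminar property: for $v,v'\in W_*$, if one is a prefix of the other then one complex contains the other, and otherwise, writing the first point of disagreement as $v=ui\ldots$, $v'=ui'\ldots$ with $i\neq i'$, condition {\Ka} gives $K_{ui}\cap K_{ui'}=\emptyset$ and hence $K_v\cap K_{v'}=\emptyset$. Now set $n=\max_i|v_i|$. By iterating {\Ka}, each $K_{v_i}$ is the disjoint union of the level-$n$ complexes it contains, so $\biguplus_i K_{v_i}$ is a union of level-$n$ complexes; by the laminar property every level-$n$ complex $K_w$ is either contained in exactly one $K_{v_i}$ (when some $v_i$ is a prefix of $w$) or disjoint from all of them. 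Using the level-$n$ decomposition $K=\biguplus_{w\in W_n}K_w$ from \eqref{Kdisjcomplexes}, it follows that
$$\mc{D}_t=\biguplus\{K_w\- w\in W_n,\ K_w\cap K_{v_i}=\emptyset\text{ for all }i\}.$$

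\textbf{Step 3: conclusion.} From the displayed identity, $\mc{D}_t$ is a union of level-$n$ complexes. Hence either the index set is empty, giving $\mc{D}_t=\emptyset$, or there is at least one $w\in W_n$ with $K_w\sw\mc{D}_t$, which is the asserted dichotomy. In the latter case Lemma \ref{lambdastar}(2) gives $\lambda^*_t(\mc{D}_t)=\lambda(\mc{D}_t)\geq\lambda(K_w)$, which is strictly positive by Lemma \ref{Kdlemma}. I do not expect a serious obstacle here; the one point requiring care is the laminar structure in Step 2, together with checking that the finitely many non-trivial blocks really are complexes rather than arbitrary measurable sets, which is exactly the content supplied by Lemma \ref{lambdastar}.
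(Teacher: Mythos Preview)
Your proposal is correct and follows essentially the same route as the paper: use Lemma \ref{lambdastar} to write $K\sc\mc{D}_t$ as a finite union of complexes, pass to the common level $n=\max_i|v_i|$ via {\Ka} and \eqref{Kdisjcomplexes}, and conclude the dichotomy and the positivity via Lemma \ref{Kdlemma}. The paper's version is slightly terser (it assumes $\mc{D}_t\neq\emptyset$ and omits the explicit laminar discussion), but the argument is the same.
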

\begin{proof}
Suppose that $A_t$ is finite and $\mc{D}_t$ is non-empty. Enumerate $A_t$ as $\{[x_i]_t\-i=1,\ldots,N\}$ where $x_i\in K$ are such that $[x_i]_t\neq [x_j]_t$ for $i\neq j$. By definition of $[x]_t$ we have $[x_i]_t\cap [x_j]_t=\emptyset$ for $i\neq j$ and by Lemma \ref{xeqcomp} we have $[x_i]_t=K^{x_i,0,t}_{w_1}$ for all $i$. 

Let $\mc{N}=\max\{|w_1^{x_i,0,t}|\-i=1,\ldots, N\}$ and note that by {\Ka} we have 
$K=\uplus_{w\in W_\mc{N}}K_w.$ 
Since $\mc{D}_t\neq\emptyset$, by \eqref{AtDteq} there is some $x\in K\sc\l(\cup A_t\r)=K\sc(\cup_i K^{x_i,0,t}_{w_1})$. Combined with $K=\uplus_{w\in W_\mc{N}}K_w.$ 
, this means that for some $w'\in W_\mc{N}$ we must have 
$K_{w'}\sw  K\sc(\cup_i K^{x_i,0,t}_{w_1})$, and by \eqref{AtDteq} we must have $K_{w'}\sw \mc{D}_t$. It follows immediately from Lemma \ref{Kdlemma} that $\lambda(\mc{D}_t)>0$.
\end{proof}

\begin{lemma}\label{finglobalratedust}
If $\sum \mc{S}^n r_n=\infty$ then ${\P\l[\exists t>0, \exists w\in W_*, K_w\sw\mc{D}_t\r]=0}.$
\end{lemma}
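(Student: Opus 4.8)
The plan is to fix $w\in W_*$, reduce the event $\{K_w\sw\mc{D}_t\}$ to a statement about which clocks ring, compute its probability directly, and then take a countable union over $w$.

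First I would characterise $\{K_w\sw\mc{D}_t\}$ in terms of clocks. By the nesting property \Ka, any complex $K_v$ meeting $K_w$ satisfies $K_v\sw K_w$ or $K_w\sw K_v$; hence the complexes containing a point of $K_w$ are precisely those $K_v$ with $v$ a prefix of $w$ (the $|w|+1$ complexes containing $K_w$) together with the complexes $K_{wv}$, $v\in W_k$, $k\geq 1$, strictly contained in $K_w$. Recalling that $x\in\mc{D}_t$ iff no clock rang during $(0,t]$ for any complex containing $x$, it follows that $K_w\sw\mc{D}_t$ holds if and only if, during $(0,t]$, the clock rang for none of these complexes. (In particular, if even one clock for some $K_{wv}$ rang, then every point of $K_{wv}\sw K_w$ would leave the dust.)

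Next I would compute the probability. The restrictions of $\Pi$ to distinct words are independent Poisson processes, and by \eqref{iii9} the clock of a level-$m$ complex rings during $(0,t]$ with probability $1-e^{-r_m t}$. Using independence across the complexes listed above, and the fact that $K_w$ has $|S|^k$ complexes strictly contained in it at level $|w|+k$, I obtain
\begin{equation*}
\P\l[K_w\sw\mc{D}_t\r]=\exp\l(-t\sum_{j=0}^{|w|}r_j\r)\,\exp\l(-t\sum_{k=1}^\infty |S|^k\,r_{|w|+k}\r).
\end{equation*}
Writing $n=|w|$, the decisive exponent is $\sum_{k\geq 1}|S|^k r_{n+k}=|S|^{-n}\sum_{m>n}|S|^m r_m$. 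Since $\sum_m|S|^m r_m=\infty$ and all terms are nonnegative, deleting the finitely many terms with $m\leq n$ leaves a divergent tail, so this exponent equals $+\infty$ and therefore $\P[K_w\sw\mc{D}_t]=0$.

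Finally, since $W_*$ is countable, a union bound yields $\P[\exists w\in W_*,\,K_w\sw\mc{D}_t]\leq\sum_{w\in W_*}\P[K_w\sw\mc{D}_t]=0$, as required. I expect the only genuinely delicate point to be the first step: recognising via \Ka that $K_w\sw\mc{D}_t$ forces \emph{all} of the clocks of the complexes contained in $K_w$ to be silent, not merely the clock of $K_w$ and of the complexes containing it. It is exactly this requirement that produces the factor $|S|^k$ in the sum $\sum_k|S|^k r_{|w|+k}$, and hence the divergence driving the result.
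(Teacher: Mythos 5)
Your proposal is correct and follows essentially the same route as the paper: both arguments fix $w$ with $|w|=n$, observe via \Ka{} that $K_w\sw\mc{D}_t$ forces every clock of every subcomplex $K_{wv}$ to be silent, compute the total rate of those clocks as $|S|^{-n}\sum_{m\geq n}|S|^m r_m=\infty$, and conclude by countability of $W_*$. The only cosmetic difference is that you evaluate $\P[K_w\sw\mc{D}_t]$ exactly as a product of exponentials (also tracking the finitely many prefix clocks, which are not needed), whereas the paper simply notes the aggregate rate is infinite so an event a.s. occurs.
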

\begin{proof}
Since $W_*$ is countable, the lemma follows if we can show that $\P\l[\exists t>0, K_w\sw\mc{D}_t\r]=0$ for an arbitrary $w\in W_*$. So fix $w\in W_*$, and set $n=|w|$. The rate at which $K_w$ is affected by reproduction events is
\begin{align*}
\int_{W^*\times K}\1\{K_w\cap K_{w'}\neq\emptyset\}\mc{P}(dw',dp)&=\sum\limits_{w'\in W_*} r_{|w|}\1\{K_w\cap K_{w'}\neq\emptyset\}\\
&\geq \sum\limits_{w'\in W_*} r_{|w|}\1\{K_{w'}\sw K_{w}\}
\end{align*}
Now, by {\Ka}, $K_{w'}\sw K_{w}$ if and only if $w'=wv$ for some $v\in W_*$. Hence,
{\allowdisplaybreaks 
\begin{align*}
\sum\limits_{w'\in W_*} r_{|w|}\1\{K_{w'}\sw K_{w}\}&=\sum\limits_{v\in W_*} r_{|wv|}=\frac{1}{\mc{S}^n}\sum\limits_{m=n}^\infty\mc{S}^m r_m=\infty
\end{align*}
}
It follows immediately that (with probability one) $K_w$ is affected by a reproduction event during $(0,t]$ for any $t>0$. Hence $\P\l[\exists t, K_w\sw\mc{D}_t\r]=0$.
\end{proof}

The following pair of lemmas, which play a crucial part in our arguments, begin the connection between $\mc{D}_t$, $A_t$ and $B^t$.

\begin{lemma}\label{Btnlemma} For each fixed $t>0$, $(B_n^t)_{n\geq 0}$ is a GWVE. The initial state is $B^t_0=\1\{\mc{E}_\emptyset>t\}$ and the $n^{th}$ stage offspring distribution is Binomial with $\mc{S}$ trials and success probability $e^{-tr_{n+1}}$. 
\end{lemma}
\begin{proof} Note first that $e^{-r_nt}$ is the probability that $K_w$, where $|w|=n$, does not see its clock ring during $(0,t]$. If $w\in\mathscr{B}^t_n$ and $|w|=n$, then the (conditional) probability that $wi\in\mathscr{B}^t_{n+1}$ is just $e^{-r_{n+1}t}$. The clocks corresponding to $K_{wi}$ and $K_{wj}$ are independent if $i\neq j$, thus the offspring distribution of $w\in\mathscr{B}^t_n$ is Binomial with $\mc{S}$ trials and success probability $e^{-r_{n+1}t}$. 
\end{proof}

\begin{lemma}\label{dustsizebranching}
For each $t>0$, $\mc{D}_t=\cap_{n\in\N_0}\cup_{w\in\mathscr{B}^t_n} K_w.$
\end{lemma}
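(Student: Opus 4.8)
The plan is to prove the identity \emph{pointwise}, separately for each fixed realisation $\omega\in\mc{A}$ and each fixed $t>0$; the qualifier ``almost surely'' then enters only because $\mc{D}_t$ itself is defined only on the full-measure event $\mc{A}$. Indeed, for a frozen realisation both sides are explicit deterministic subsets of $K$ determined by the finite/countable collection of points of $\Pi_{(0,t]}$, so the whole statement reduces to unwinding the two definitions and matching them through a single combinatorial dictionary supplied by {\Ka}. In particular the argument makes no use of whether $\sum r_n$ is finite or infinite.

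First I would record the book-keeping consequences of {\Ka}. Since $K=\biguplus_{w\in W_n}K_w$ is a disjoint union at every level $n$, each $x\in K$ lies in exactly one complex of each level; write $K_{w(x,n)}$ for it, so $w(x,n)\in W_n$ is the unique length-$n$ word with $x\in K_{w(x,n)}$, and the words $w(x,n)$ are nested (each a prefix of the next). Again by {\Ka} (exactly as used in the proof of Lemma \ref{finglobalratedust}), one has $K_v\sw K_{v'}$ precisely when $v'$ is a prefix of $v$. Combining these, for any word $w'$ of length $m$ the three assertions $x\in K_{w'}$, $\;w'=w(x,m)$, and $\;K_{w(x,n)}\sw K_{w'}$ for some (equivalently every) $n\geq m$ are mutually equivalent. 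This dictionary is the one nontrivial input.

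Next I would translate each side into a statement about $\Pi_{(0,t]}$. Unwinding $\mc{D}_t=\{x\-N^{x,0,t}_1=\infty\}$ through the infimum defining $N^{x,0,t}_1$ in Definition \ref{Eseq}, membership $x\in\mc{D}_t$ says exactly that there is \emph{no} event $(u,w',p)\in\Pi_{(0,t]}$ with $x\in K_{w'}$. On the other side, $x\in\bigcap_n\bigcup_{w\in\mathscr{B}^t_n}K_w$ says that for every $n$ the unique complex $K_{w(x,n)}$ containing $x$ is a good block, i.e.\ $w(x,n)\in\mathscr{B}^t_n$, which by definition of $\mathscr{B}^t_n$ means there is no event $(s',w',p')\in\Pi_{(0,t]}$ with $K_{w(x,n)}\sw K_{w'}$.

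Finally I would close the double inclusion using the dictionary. If $x\in\mc{D}_t$ and some level $n$ carried an event with $K_{w(x,n)}\sw K_{w'}$, then $w'$ would be a prefix of $w(x,n)$, forcing $x\in K_{w'}$ and contradicting $x\in\mc{D}_t$; hence $w(x,n)\in\mathscr{B}^t_n$ for all $n$ and $x$ lies in the right-hand intersection. Conversely, if $x$ lies in the intersection but $x\notin\mc{D}_t$, take the offending event $(u,w',p)\in\Pi_{(0,t]}$ with $x\in K_{w'}$ and put $m=|w'|$; then $w'=w(x,m)$ and $K_{w(x,m)}\sw K_{w'}$, contradicting $w(x,m)\in\mathscr{B}^t_m$. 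I do not expect a genuine obstacle: the argument is pure definition-chasing once the prefix dictionary is in place. The only point needing care is the level $n=0$ (empty-word) case, where an event in $K_\emptyset=K$ makes both sides empty, together with checking that the infimum defining $N^{x,0,t}_1$ really ranges over $n\geq 0$ so that such top-level events are not overlooked.
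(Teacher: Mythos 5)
Your proposal is correct and follows essentially the same route as the paper's proof: both sides are unwound pointwise for a fixed realisation into statements about $\Pi_{(0,t]}$, and the double inclusion is closed using the fact from {\Ka} that each $x$ lies in a unique nested chain of complexes and that $K_v\sw K_{v'}$ exactly when $v'$ is a prefix of $v$. Your explicit prefix dictionary and the remark about the level-$0$ case are just slightly more spelled-out versions of the same definition-chasing the paper performs.
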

\begin{proof}
Suppose first that $x\in \mc{D}_t$, so as $[x]_t$ is a singleton. If $\mc{E}_w\leq t$ for some $K_w\ni x$ then by definition of $X$ the set $X_{0,t}(K_w)$ would be a single point. By Lemma \ref{Kdlemma} the set $K_w$ is infinite and by Lemma \ref{xeqcomp} $K_w\ni x$ implies $K_w\sw[x]_t$, so in fact we must have $\mc{E}_w>$ for all $K_w\ni x$. Thus $x\in \cap_n\cap_{w\in \mathscr{B}^t_n} K_w$.

Similarly, if $x\in \cap_n\cap_{w\in \mathscr{B}^t_n} K_w$ then $\mc{E}_w>t$ for all $K_w\ni x$. If $x\notin \mc{D}_t$ then by Lemma \ref{xeqcomp} we would have $X_{0,t}^{-1}(x)=K^{x,0,t}_{w_1}$, which implies that $\mc{E}_{w_1}\leq t$, in contradiction to the above. Hence we must have $x\in\mc{D}_t$.
\end{proof}


\subsection{Degeneracy of GWVEs}
\label{realanalysissec}

We make use of Lemma \ref{dustsizebranching} through a result of \cite{J1976} which characterises the extinction criteria of particular GWVEs. Define 
\begin{align}
m^t_n&=\E\l[B^t_n\r]=\mc{S}^ne^{-t\sum_0^nr_j},\label{mtndef}\\
g^t&=\sum\limits_{n=1}^\infty \frac{(1-e^{-r_{n+1}t})^{\mc{S}}+\mc{S}e^{-r_{n+1}t}-1}{\mc{S}e^{-r_{n+1}t}\,m^t_n}.\label{gtdef}
\end{align}
Note that $m^t_n\in(0,\infty)$. For all $x> 0$ and $n\in\N$, $n\geq 2$, it holds that $(1-x)^n+nx-1> 0$, and hence $g^t\in(0,\infty]$. 

\begin{lemma}[\citealt{J1976}]\label{l3}
$\P\l[\exists n\in\N, B^t_n=0\r]<1$ if and only if both $\inf m^t_n>0$ and $g^t<\infty$.
\end{lemma}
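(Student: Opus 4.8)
The plan is to deduce Lemma \ref{l3} directly from the necessary-and-sufficient degeneracy criterion for Galton--Watson processes in varying environments with bounded offspring, as developed in \cite{J1976}, by specialising its abstract quantities to the binomial offspring law identified in Lemma \ref{gwvecoupling}. The key preparatory observation is that, by Lemma \ref{gwvecoupling}, $(B_n^t)$ is a GWVE whose offspring distribution at stage $n$ is binomial with $|S|$ trials and success probability $e^{-r_{n+1}t}$; in particular family sizes are bounded by $|S|$ uniformly in $n$, so the boundedness and moment hypotheses underlying the results of \cite{J1976} are automatically met. I would also note at the outset that degeneracy, $\P[\exists n\in\N, B^t_n=0]=1$, is exactly the negation of the event whose probability the lemma asserts to be strictly less than $1$; hence it suffices to characterise non-degeneracy.

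First I would record the generating-function data. Writing $f_n$ for the offspring p.g.f.\ at stage $n$ and $p_n=e^{-r_{n+1}t}$, we have $f_n(s)=(1-p_n+p_ns)^{|S|}$, so that $f_n(0)=(1-p_n)^{|S|}$ and $f_n'(1)=|S|p_n$. The product of means through stage $n$ is then $\prod_{j=0}^{n-1}f_j'(1)=\prod_{j=1}^{n}|S|e^{-r_jt}=m^t_n$, consistent with $\E[B^t_n]=\E[B^t_0]m^t_n$ from \eqref{pp23}. It is worth recording the identity $f_n(0)+f_n'(1)-1=\E[(\mc{G}^{(n)}-1)^+]$, which is what makes the numerator of $g^t$ the natural object appearing in the criterion.

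Then I would invoke Theorem 2.3 of \cite{J1976} (using its Lemma 1.1 to put the condition into the required form — typically Lemma 1.1 supplies the quantitative estimate on the survival probability $1-\P[B^t_n=0]$ and Theorem 2.3 converts this into the dichotomy), which states that such a process is non-degenerate if and only if the product of means stays bounded away from $0$ and an associated series converges. The first condition is precisely $\inf_n m^t_n>0$. For the second, substituting $f_n(0)$ and $f_n'(1)$ and using $f_n'(1)\,m^t_n=|S|e^{-r_{n+1}t}\,m^t_n=m^t_{n+1}$, the series becomes exactly $\sum_{n\ge1}\frac{(1-e^{-r_{n+1}t})^{|S|}+|S|e^{-r_{n+1}t}-1}{|S|e^{-r_{n+1}t}\,m^t_n}=g^t$. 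The positivity of each summand, hence $g^t\in(0,\infty]$, follows from the elementary inequality $(1-x)^n+nx-1>0$ for $x>0$, $n\ge2$ already noted above, while $m^t_n\in(0,\infty)$ is immediate. Combining, non-degeneracy holds iff $\inf_n m^t_n>0$ and $g^t<\infty$, which is the assertion.

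The main obstacle is bookkeeping rather than conceptual: I must verify that the precise hypotheses of Lemma 1.1 and Theorem 2.3 of \cite{J1976} (any regularity demanded of the $f_n$, together with the uniform bound $|S|$ on family sizes) are genuinely satisfied here, and that the index conventions in \cite{J1976} are translated correctly, so that the abstract series matches $g^t$ exactly rather than up to a harmless shift or an equivalent (e.g.\ variance-based) form. A useful sanity check is the constant-environment case $r_n\equiv c$: there $m^t_n=\mu^n$ with $\mu=|S|e^{-ct}$, so $\inf_n m^t_n>0\iff\mu\ge1$ and $g^t<\infty\iff\mu>1$, and the two conditions together reproduce the classical fact that a Galton--Watson process survives with positive probability iff it is supercritical.
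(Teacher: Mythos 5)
Your proposal is correct and follows essentially the same route as the paper, which likewise obtains Lemma \ref{l3} by direct appeal to Lemma 1.1 and Theorem 2.3 of \cite{J1976}; the paper in fact gives no more detail than the citation itself, so your explicit identification of $f_n(0)$, $f_n'(1)$, the mean product $m^t_n$, and the series $g^t$ (together with the constant-environment sanity check) supplies exactly the bookkeeping the paper leaves implicit.
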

\begin{proof}
Lemma 1.1 of \cite{J1976} tells us that if $\inf_n m^t_n=0$, then $\P\l[\exists n\in\N, B^t_n=0\r]=1$. Our offspring distributions are binomial with $\mc{S}$ trials, which is sufficient for Theorem 2.3 of \cite{J1976} to apply. From this we obtain that, if $\inf_n m^t_n>0$, then $\P\l[\exists n\in\N, B^t_n=0\r]=1$ if and only if $g^t=\infty$.
\end{proof}


\begin{lemma}\label{a1}
Suppose that $v\in(0,\infty)$ is such that $\inf _n m_n^{v}>0$. Then for all $u\in(0,v)$, $\inf_n m^u_n>0$ and $g^u<\infty$.

\end{lemma}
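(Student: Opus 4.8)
The first assertion, $\inf_n m^u_n>0$, is immediate: since every $r_j\geq 0$, the partial sums $S_n=\sum_{j=1}^n r_j$ are nonnegative, so $m^t_n=|S|^n e^{-tS_n}$ is non-increasing in $t$ (as already noted in the excerpt). Hence for $u<v$ we have $m^u_n\geq m^v_n$ for every $n$, giving $\inf_n m^u_n\geq\inf_n m^v_n>0$. All the work therefore goes into showing $g^u<\infty$. The plan is to bound the general term of $g^u$ by a constant multiple of $1/m^u_n$, and then to prove that $\sum_n 1/m^u_n$ converges geometrically; the lower bound $\inf_n m^v_n>0$ will be used, crucially, only at the very last step.

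For the first reduction I would establish the elementary inequality
\[
(1-x)^{|S|}+|S|x-1\leq \binom{|S|}{2}x^2\qquad(0\leq x\leq 1).
\]
This follows from Taylor's theorem: the left-hand side $N(x)$ satisfies $N(0)=N'(0)=0$ and $N''(x)=|S|(|S|-1)(1-x)^{|S|-2}\leq|S|(|S|-1)$ on $[0,1]$, whence $N(x)\leq\tfrac12|S|(|S|-1)x^2$. (The positivity half of this was already recorded when $g^t$ was introduced.) Applying it with $x=e^{-r_{n+1}u}\in(0,1]$ bounds the numerator of the $n$-th term of $g^u$ by $\binom{|S|}{2}e^{-2r_{n+1}u}$, and this extra factor of $e^{-r_{n+1}u}$ is exactly what cancels the $e^{-r_{n+1}u}$ sitting in the denominator. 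Concretely,
\[
\frac{(1-e^{-r_{n+1}u})^{|S|}+|S|e^{-r_{n+1}u}-1}{|S|e^{-r_{n+1}u}\,m^u_n}\leq\frac{\binom{|S|}{2}e^{-2r_{n+1}u}}{|S|e^{-r_{n+1}u}\,m^u_n}=\frac{|S|-1}{2}\cdot\frac{e^{-r_{n+1}u}}{m^u_n}\leq\frac{|S|-1}{2}\cdot\frac{1}{m^u_n},
\]
using $e^{-r_{n+1}u}\leq1$ in the last step. Thus $g^u\leq\frac{|S|-1}{2}\sum_{n\geq1}1/m^u_n$.

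It remains to show $\sum_n 1/m^u_n<\infty$, and here I would use a multiplicative interpolation between the levels $u$ and $v$. Writing $\theta=u/v\in(0,1)$, a direct computation gives the identity
\[
m^u_n=\bigl(|S|^n\bigr)^{1-\theta}\,\bigl(m^v_n\bigr)^{\theta},
\]
since the powers of $|S|$ recombine to $|S|^n$ and the surviving exponential is $e^{-v\theta S_n}=e^{-uS_n}$. Setting $\delta=\inf_k m^v_k>0$ yields $m^u_n\geq\delta^{\theta}|S|^{n(1-\theta)}$, so that $1/m^u_n\leq\delta^{-\theta}\bigl(|S|^{-(1-\theta)}\bigr)^n$. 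Because $u<v$ forces $1-\theta>0$, the common ratio $|S|^{-(1-\theta)}$ lies in $(0,1)$ and the series $\sum_n 1/m^u_n$ is dominated by a convergent geometric series. Combining with the previous paragraph gives $g^u<\infty$.

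The main obstacle is precisely this last summability. The hypothesis $\inf_n m^v_n>0$ translates only into an \emph{upper} bound $S_n\leq(n\log|S|-\log\delta)/v$ on the partial sums, so one cannot argue that $S_n$ grows, and a crude estimate such as bounding $g^u$ by $\sum_n e^{-(v-u)S_n}$ need not converge. The interpolation identity $m^u_n=(|S|^n)^{1-\theta}(m^v_n)^{\theta}$ is what resolves this: it repackages the bare lower bound on $m^v_n$ together with the geometric factor $|S|^n$ into genuine geometric decay of $1/m^u_n$. The only other point requiring care is the quadratic numerator estimate, which is needed to absorb the $e^{r_{n+1}u}$ that would otherwise be produced by the $e^{-r_{n+1}u}$ in the denominator of $g^u$.
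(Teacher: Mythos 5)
Your proof is correct, and it takes a genuinely different route from the paper's on both halves. For $\inf_n m^u_n>0$ you simply invoke monotonicity of $t\mapsto m^t_n$ (which the paper itself records elsewhere), whereas the paper's proof first extracts from the hypothesis the Ces\`{a}ro bound $\limsup_n\frac{1}{n}\sum_1^n r_j\leq\frac{\log|S|}{v}$ and then deduces $m^u_n\geq e^{\epsilon n}$ for large $n$; your argument is shorter, but the paper's detour is not wasted, since that intermediate inequality is reused in the proof of Lemma \ref{a3}. For $g^u<\infty$ the paper bounds the numerator crudely by $|S|$, rewrites $\frac{1}{e^{-r_{n+1}u}m^u_n}=\frac{|S|}{m^u_{n+1}}$ by an index shift, and then sums using $m^u_n\geq e^{\epsilon n}$; you instead use the quadratic bound $(1-x)^{|S|}+|S|x-1\leq\binom{|S|}{2}x^2$ (essentially the upper half of Lemma \ref{an3}) to avoid the index shift, and obtain summability of $\sum_n 1/m^u_n$ via the interpolation identity $m^u_n=(|S|^n)^{1-u/v}(m^v_n)^{u/v}\geq\delta^{u/v}|S|^{n(1-u/v)}$, which yields an explicit geometric rate directly from $\delta=\inf_n m^v_n>0$ without ever passing through the Ces\`{a}ro means. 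Both arguments ultimately rest on showing that $m^u_n$ grows exponentially for $u$ strictly below $v$; your interpolation makes that quantitative and self-contained, while the paper's version produces an auxiliary fact it needs later.
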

\begin{proof}
Let $\inf _n m_n^{v}>0$. Suppose that $\epsilon>0$ and that for infinitely many $n$ we have $\frac{1}{n}\sum_0^nr_j>\frac{\epsilon+\log\mc{S}}{v}$. For such $n$, 
\begin{align*}
m^v_n&=\l(\exp\l(\log\mc{S}-v\frac{1}{n}\sum_0^nr_j\r)\r)^n\leq \l(\exp\l(\log\mc{S}-v\frac{\epsilon+\log\mc{S}}{v}\r)\r)^n=e^{-\epsilon n}.
\end{align*}
This may not occur for infinitely many $n$ since $\inf m^v_n>0$. So, we may assume that both $\inf_n m^v_n>0$, and
\begin{equation}\label{pppp101}
\limsup_n \frac{1}{n}\sum_0^nr_j\leq\frac{\log\mc{S}}{v}.
\end{equation}

Let $u\in(0,v)$. By the above, for some $\epsilon>0$ we have $0<u\limsup_n \frac{1}{n}\sum_0^nr_j<\log \mc{S}-\epsilon.$ 
Hence, there exists $N\in\N$ (dependent on $\epsilon$) such that for all $n>N$, $0<u\frac{1}{n}\sum_0^nr_j<\log \mc{S}-\epsilon.$ 
Thus,
\begin{equation*}
m^u_n=\l(\exp\l(\log\mc{S}-u\frac{1}{n}\sum_0^nr_j\r)\r)^n\geq\l(\exp\l(\epsilon\r)\r)^n,
\end{equation*}
so clearly $\inf m^u_n>0$. Also
\begin{align*}
g^u\leq \sum_1^\infty \frac{\mc{S}}{\mc{S}e^{-r_{n+1}u}m^u_n}=\mc{S}\sum_1^\infty\frac{1}{m^u_{n+1}}\leq \mc{S}\sum_1^\infty\frac{1}{(e^{\epsilon})^{n+1}}<\infty
\end{align*}
as required. 
\end{proof}

\begin{lemma}\label{an3}
There exist $C_1,C_2\in(0,\infty)$ (dependent only upon $\mc{S}$) such that for all $t>0$,
$$C_1\sum_n\frac{e^{-r_{n+1}t}}{m^t_n}\leq g^t\leq C_2\sum_n\frac{e^{-r_{n+1}t}}{m^t_n}.$$
\end{lemma}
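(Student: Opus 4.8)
The plan is to reduce the comparison of the two series to a single elementary inequality, applied term by term. Writing $x_n = e^{-r_{n+1}t}$, and noting that $r_{n+1} \geq 0$ and $t > 0$ force $x_n \in (0,1]$, the $n$-th summand of $g^t$ is $\frac{(1-x_n)^{|S|} + |S| x_n - 1}{|S| x_n m^t_n}$, while the $n$-th summand of the comparison series is $\frac{x_n}{m^t_n}$. Dividing one by the other, all dependence on $m^t_n$ cancels, and it suffices to bound the quotient $\frac{(1-x)^{|S|} + |S| x - 1}{|S| x^2}$ above and below by positive constants, uniformly over $x \in (0,1]$. Since that bound is uniform in $x$, it holds for every $n$ and every $t > 0$ simultaneously, and summing over $n$ then yields the claimed inequalities with the same constants.

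First I would establish the key inequality: there are constants $c_1, c_2 \in (0,\infty)$, depending only on $|S|$, with $c_1 x^2 \leq (1-x)^{|S|} + |S| x - 1 \leq c_2 x^2$ for all $x \in (0,1]$. The cleanest route is to expand $(1-x)^{|S|}$ by the binomial theorem and observe that the constant and linear terms cancel against $|S|x - 1$, so that $(1-x)^{|S|} + |S| x - 1 = x^2 P(x)$, where $P(x) = \binom{|S|}{2} - \binom{|S|}{3} x + \cdots + (-1)^{|S|} x^{|S|-2}$ is a polynomial with $P(0) = \binom{|S|}{2}$. Thus the quotient $h(x) := \frac{(1-x)^{|S|} + |S| x - 1}{x^2} = P(x)$ extends to a continuous (indeed polynomial) function on the compact interval $[0,1]$.

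Next I would argue positivity. The paper already records that $(1-x)^{n} + n x - 1 > 0$ for all $x > 0$ and integers $n \geq 2$; applying this with $n = |S| \geq 2$ gives $h(x) > 0$ for every $x \in (0,1]$, while $h(0) = \binom{|S|}{2} > 0$. Hence $h = P$ is continuous and strictly positive on $[0,1]$, so by compactness it attains a finite maximum $M$ and a strictly positive minimum $m$, both depending only on $|S|$. Dividing through by $|S|$ and setting $C_1 = m/|S|$, $C_2 = M/|S|$ gives $C_1 \leq \frac{(1-x)^{|S|} + |S|x - 1}{|S| x^2} \leq C_2$ on $(0,1]$, which is exactly the per-term comparison required.

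I do not anticipate a genuine obstacle here: the only point needing care is to confirm that the constants are extracted uniformly in $x$ (and hence independent of $t$ and $n$), which the compactness argument guarantees, together with the boundary value $x = 1$ (arising when $r_{n+1} = 0$), at which $h(1) = |S| - 1 > 0$ causes no difficulty. The final step is purely formal: multiplying the per-term bounds by $\frac{x_n}{m^t_n} > 0$ and summing over $n$ transfers the two-sided estimate from the individual terms to the full series $g^t$.
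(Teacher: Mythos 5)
Your proposal is correct and follows essentially the same route as the paper: the paper's proof also writes $g^t=\sum_n f_{|S|}(e^{-r_{n+1}t})/(|S|e^{-r_{n+1}t}m^t_n)$ with $f_n(x)=(1-x)^n+nx-1$ and invokes the elementary two-sided bound $C_1x^2\leq f_{|S|}(x)\leq C_2x^2$ on $(0,1]$. The only difference is that you supply the details the paper leaves as ``elementary'' (binomial cancellation of the constant and linear terms, positivity, and compactness of $[0,1]$), which is a sound way to justify that step.
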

\begin{proof}
Let $f_n:(0,\infty)\to(0,\infty)$ be the function $f_n(x)=(1-x)^n+nx-1.$ It is elementary to show that there exists $C_1,C_2\in(0,\infty)$ (dependent only upon $\mc{S}$) such that for all $x\in (0,1]$, $C_1x^2\leq f_{\mc{S}}(x)\leq C_2x^2.$ Since $g^t=\sum_n\frac{f_{\mc{S}}(e^{-r_{n+1}t})}{\mc{S}e^{-r_{n+1}t}\,m^t_n},$
the stated result follows.
\end{proof}


\begin{lemma}\label{a3} For each $(r_n)$, precisely one of the following three cases occurs.
\begin{enumerate}
\item $\limsup\limits_{n\to\infty}\frac{1}{n}\sum_0^nr_j=\infty$ if and only if $\inf m^t_n=0$ for all $t>0$.

\item $\limsup\limits_{n\to\infty}\frac{1}{n}\sum_0^nr_j=0$ if and only if $\inf m^t_n>0$ for all $t>0$.

\item $\limsup\limits_{n\to\infty}\frac{1}{n}\sum_0^nr_j\in(0,\infty)$ if and only if both (1)
if $t<t_0$ then $\inf_n m^t_n>0$ and (2)
if $t\geq t_0$ and $\inf m^{t_0}>0$ then $g^{t_0}=\infty$.

\end{enumerate}
\end{lemma}
\begin{proof} Since $\mc{L}=\limsup_{n}\frac{1}{n}\sum_0^nr_j$ exists in $[0,\infty]$ precisely one of $\mc{L}=0$, $\mc{L}=\infty$ and $\mc{L}\in(0,\infty)$ occurs. We give each case in turn.

\textsc{Case 1:} Suppose that $\limsup_{n}\frac{1}{n}\sum_0^nr_j=\infty$. For any $t>0$, we can pick a subsequence $(r_{i_n})$ of $(r_n)$ such that for all $n$, $\frac{1}{i_n}\sum_0^{i_n}r_j>\frac{\log{\mc{S}}+1}{t}$. Hence $m^t_{i_n}\leq (\exp(-1))^{i_n}$ for all $n$, and since $i_n\to\infty$ it follows that $\inf_n m^t_n=0$. 

Conversely, if $\limsup_{n}\frac{1}{n}\sum_0^nr_j=C<\infty$ then for $t=\frac{\log\mc{S}}{2C}>0$ we note that 
$$m^t_n=\l(\exp\l(\log{\mc{S}}-\frac{\log\mc{S}}{2}\frac{1}{C}\frac{1}{n}\sum_0^nr_j\r)\r)^n.$$
For sufficiently large $n$, $\frac{1}{n}\sum_0^nr_j\leq \frac{3}{2}C$, and hence for sufficiently large $n$, $m^t_n\geq \l(\exp\l(\frac{1}{3}\log \mc{S}\r)\r)^n$. Hence $\inf_n m^t_n>0$.

\textsc{Case 2:} Suppose that $\limsup_n\frac{1}{n}\sum_0^n r_j=0$ and let $t>0$. Then, for all sufficiently large $n$ we have $\frac{1}{n}\sum_0^n r_j\leq\frac{1}{2t}$. Hence, for all sufficiently large $n$, $m^t_n\geq \l(\exp\l(\frac{1}{2}\log \mc{S}\r)\r)^n$. Thus $\inf_m m^t_n>0$.

Conversely, suppose that $\inf _n m_n^{t}>0$ for all $t$. Fixing $t$, and using the first step of the proof of Lemma \ref{a1}, we obtain from \eqref{pppp101} that $\limsup_n \frac{1}{n}\sum_0^nr_j\leq\frac{\log\mc{S}}{t}$. However, we have $\inf _n m_n^{t}>0$ for all $t>0$, so $\limsup_n\frac{1}{n}\sum_0^n r_j=0$. 

\textsc{Case 3:} Suppose that $\limsup_n\frac{1}{n}\sum_0^n r_j\in(0,\infty)$. Recall from \eqref{t0def} that $t_0=\frac{\log\mc{S}}{\mc{L}}$ where $\mc{L}=\limsup_{n}\frac{1}{n}\sum_0^nr_j$. Consider first when $s<t_0$. Then there exists $\epsilon\in(0,\log|S|)$ such that $s\leq \frac{\log|S|-\epsilon}{\mathscr{L}}.$ Hence, 
$$m^s_n\geq\l(\exp\l(\log|S|-\frac{\log|S|-\epsilon}{\mathscr{L}}\frac{1}{n}\sum_1^n r_j\r)\r)^n.$$
There exists $N\in\N$ such that for all $n> N$, $\frac{1}{n}\sum_1^n r_j\leq \frac{\log|S|-\epsilon/2}{\log|S|-\epsilon}\,\mathscr{L}.$
Hence, for all $n>N$,
$$m^s_n\geq \l(\exp\l(\log|S|-(\log|S|-\epsilon)\frac{\log|S|-\frac{\epsilon}{2}}{(\log|S|-\epsilon)}\r)\r)^n=\l(\exp(\epsilon/2)\r)^n$$
Thus $m^s_n\to\infty$ and hence $\inf m^s_n>0$.

Now consider $t_0$ itself. Define $a_n\in\R$ by
$r_n=\mathscr{L}+a_n.$
In this notation $m^{t_0}_n=\exp\l(-t_0\sum_1^n a_j\r)$. We now consider two cases.

Firstly, if $\limsup_n \sum_1^n a_j=\infty$ then it is immediate that $\inf_n m^{t_0}_n=0$. Since $m^t_n$ is a decreasing function of $t$ (for each fixed $n\in\N_0$), this implies that $\inf_nm^t_n=0$ for all $t\geq t_0$.

It remains only to consider the case $\limsup \sum_1^n a_j<\infty$, in which case $\inf m^{t_0}_n>0$. By Lemma \ref{an3} there exists $C\in(0,\infty)$ such that $g^{t_0}\geq C(\inf_n m^{t_0}_n)^{-1}\sum_1^n e^{-r_{n+1}t_0}.$
Since $\limsup\frac{1}{n}\sum_1^n r_j\in(0,\infty)$, $(r_n)$ has a subsequence $(r_{i_n})$ such that $\limsup_n r_{i_n}<\infty$. Hence $g^{t_0}=\infty$. By Lemma \ref{a1} this implies that $\inf_n m^t_n=0$ for all $t>t_0$, which completes the proof. 
\end{proof}


\subsection{Continuity} 
\label{tausec}

In this section we establish that various aspects of our model are, in some sense, continuous across time.

For $s<t$, define $\mc{F}_{s,t}=\sigma(M_{(s,t]})$. Recall that in Section \ref{proofintro} we showed that the GWVE $B^t$ is equivalent to an inhomogeneous percolation on the tree $W_*$. For a possibly random $\mc{F}$-measurable time $t\in[0,\infty)$, let $\mc{E}^t=\inf\{s>t\-M_{(t,s]\times\{w\}}\neq \emptyset\}.$ In words, this is the first time after $t$ at which $K_w$ sees a coalescence event. For each $w\in W_*$, $s\in (0,\infty)$ and possibly random $\mc{F}$-measurable time $t$ define 
\begin{align*}
Q_{w,t}&=\{\exists\text{ a sequence }(i_n)_{n\in\N}\sw S\text{ such that }\forall m\in\N, \mc{E}_{wi_1\ldots i_m}>t\}\\
Q^t_{w,s}&=\{\exists\text{ a sequence }(i_n)_{n\in\N}\sw S\text{ such that }\forall m\in\N, \mc{E}^t_{wi_1\ldots i_m}>s\}\\
R^t_w&=\Omega\sc\cup_{s>0} (Q^t_{w,s}).
\end{align*}
When we wish to use a random time $t$ in the above definition we will say so explicitly. For the remainder of this section, the symbol $t$ is used only for deterministic times.

In the language of percolation, $Q_{w,s}$ is the event that $w\in W_*$ is connected to infinity at time $t$. The set $Q^t_{w,s}$ is the event that a connection between $w$ and infinity that exists at time $t$ will continue to exist until (at least) time $t+s$. The set $R^t_{w}$ is the event that any connection between $w$ and infinity which might exist at time $t$ will be instantaneously disconnected immediately after time $t$. 

\begin{lemma}\label{Rt01}
Let $t\in[0,\infty)$ and $w\in W_*$. Then $\P\l[R^t_w\r]$ is either $0$ or $1$.
\end{lemma}
\begin{proof}
If $s_1\leq s_2$ then $Q^t_{w,s_2}\sw  Q_{w,s_1}^t$. Thus, for all $N\in\N$ we have $R^t_w=\Omega\sc\cup_{n\geq N}Q^t_{w,1/n}$. Noting that $Q^t_{w,s}$ is $\mc{F}_{(s,t+s]}$ measurable, we obtain that $R^t_{w}$ is $\mc{F}_{t,t+\frac{1}{N}}$ measurable for all $N\in\N$. The stated result then follows from the Kolmogorov zero-one law.
\end{proof}

\begin{lemma}\label{philemma}
The function $t\mapsto \phi(t)=\P\l[\forall n\in\N_0, B^t_n\neq 0\r]$ is strictly monotone decreasing over $[0,\infty)$. Further, $\phi$ is left continuous over $t\in[0,\infty)$. If $\phi(s)>0$ then $\phi$ is right continuous on $[0,s)$.
\end{lemma}
\begin{proof}
Note that $B^t_n\sw B^s_n$ for all $s\leq t$; it follows immediately that $\phi(t)$ is decreasing. The time at which clock $\mc{E}_w$ ring has a continuous distribution on $[0,\infty$), so for all $[a,b]\sw[0,\infty)$ there is positive probability of having $\mc{E}_w\in[a,b]$. It follows from this that $\phi(t)$ is strictly decreasing.

For continuity, note that $\phi(t)=\lim_{n}\P\l[B^t_n\neq 0\r]$, which is a decreasing limit as $n\to\infty$. Each $\mc{E}_w$ has continuous distribution, so the definition of $B^t$ implies that the function $t\mapsto \P\l[B^t_n\neq 0\r]$ is continuous in $t$. Thus $\phi(t)$ is upper semicontinuous. Since $\phi(t)$ is also decreasing, $\phi(t)$ is left continuous on $[0,\infty)$.

Let $0\leq t<s$ and be such that $\phi(s)>0$. In order to show that $\phi$ is right continuous at $t$, we must prove that the event
\begin{align}\label{cont13}
\{\forall n\in\N_0, B^t_n\neq 0\text{ and }\forall u>t\;\exists n\in\N_0, B^u_n=0\}
\end{align}
has probability zero. Note that 
\begin{align}
\eqref{cont13}&=Q_{\emptyset,t}\cap (\cap_{u>t}(\Omega \sc Q_{\emptyset,u}))=Q_{\emptyset,t}\cap (\cap_{u>0}(\Omega \sc Q_{\emptyset,t+u}))\notag\\
&=Q_{\emptyset,t}\cap (\Omega\sc\cup_{u>0}\,Q_{\emptyset,t+u})\sw Q_{\emptyset,t}\cap (\Omega\sc\cup_{u>0}\,Q^t_{\emptyset,u})\notag\\
&=Q_{\emptyset,t}\cap R^t_\emptyset.\label{cont15}
\end{align}

Suppose \eqref{cont13} has positive probability. Then by \eqref{cont15} we have $\P\l[R^t_\emptyset\r]>0$, which by Lemma \ref{Rt01} implies that $\P\l[R^t_\emptyset\r]=1$. By the time homogeneity of our model this means that also $\P\l[R^0_\emptyset\r]=1$. Hence $\P\big[Q^0_{\emptyset,u}\big]=\P\l[Q_{\emptyset,u}\r]=0$ for all $u>0$, which means that $B^u$ is almost surely degenerate for all $u>0$, in contradiction to our hypothesis that $\phi(s)>0$. So in fact $\P\l[A\r]=0$, which completes the proof.
\end{proof}

\begin{lemma}\label{tault0}
If $\limsup_{n}\frac{1}{n}\sum_0^nr_j\in(0,\infty)$ then $\P\l[\pi<t_0\r]=1$.
\end{lemma}
\begin{proof}
Suppose that $\limsup_{n}\frac{1}{n}\sum_0^nr_j\in(0,\infty)$. By combining Lemmas \ref{a1} and \ref{an3} we have that $\P\l[\forall n\in\N_0, B^{t_0}_n\neq 0\r]=0$. Hence, by the left continuity proved in Lemma \ref{philemma}, for all $\epsilon>0$ there is some $\delta>0$ such that $\P\l[\forall n\in\N_0, B^{t_0-s}_n\neq 0\r]<\epsilon$ for all $s\in[0,\delta)$. Hence $\pi<t_0$ almost surely.
\end{proof}

Let $(\mc{G}_t)$ denote the usual augmented filtration (see section II.67 of \citealt{RW2000}) of $\mc{F}_t=\sigma(M_{[0,s]}\-s\leq t)$. Let
$\pi=\inf\{t>0\-\exists n\in\N, B^t_n=0\}$ and note that, since $(\mc{G}_t)$ is right continuous, $\pi$ is a $(\mc{G}_t)$ stopping time. 

\begin{lemma}\label{piextinct}
If $\limsup_n\frac{1}{n}\sum_0^n r_j<\infty$ then $\P\l[\exists n\in\N_0, B^\pi_n=0\r]=1$.
\end{lemma}
\begin{proof}

Let $\limsup_n\frac{1}{n}\sum_0^n r_j<\infty$ and suppose (for a contradiction) that $\P\l[\forall n\in\N_0, B^\pi_n\neq 0\r]>0$. By definition of $\pi$, almost surely for all $t>\pi$ there is some $n\in\N_0$ such that $B^t_n=0$. Thus
\begin{equation}\label{cont17}
\{\forall n\in\N_0, B^\pi_n\neq 0\text{ and }\forall u>\pi\;\exists n\in\N_0\;B^t_n=0\}
\end{equation}
has positive probability. The same rearrangement as was used in \eqref{cont15}, with $\pi$ in place of $t$, shows that $\eqref{cont17}\sw Q_{\emptyset,\pi}\cap R^\pi_\emptyset.$ Hence $\P\l[R^\pi_\emptyset\r]>0$. 

By the strong Markov property of the time-homogeneous process $M$ at the $(\mc{G}_t)$ stopping time $\pi$, we have that $R^\pi_\emptyset$ and $R^0_\emptyset$ have the same distribution, hence also $\P\l[R^\pi_\emptyset\r]>0$. By Lemma \ref{Rt01} we thus have $\P\l[R^0_{\emptyset}\r]=1$. Thus $\P\big[Q^0_{\emptyset,u}\big]=\P\l[Q_{\emptyset,u}\r]=0$ for all $u>0$, which means that $B^u$ is almost surely degenerate for all $u>0$. 

However, since $\limsup_n\frac{1}{n}\sum_0^n r_j<\infty$, by Lemmas \ref{a1} and \ref{a3} there is almost surely some $\epsilon>0$ such that $\P\l[\forall n\in\N_0, B^\epsilon_n\neq 0\r]>0$, so we reach a contradiction and conclude that in fact, $\P\l[\forall n\in\N_0, B^\pi_n\neq 0\r]=0$.
\end{proof}

\subsection{Dust and GWVEs}
\label{dustandgwves}

In this section we build on Lemma \ref{dustsizebranching} and relate the behaviour of $\mc{D}_t$ to the behaviour of $B^t$. 

\begin{lemma}\label{l2}
Let $t>0$. Then $\mc{D}_t=\emptyset$ if and only if $\exists n\in\N, B^t_n=0.$
\end{lemma}
\begin{proof} Fix $t>0$. Suppose first that for some (random) $n\in\N$, $B^t_n=0$. Then, by Lemma \ref{dustsizebranching}, $\mc{D}_t=0$. For the converse, If $B^t_n\neq 0$ for all $n$ then it is easily seen that there exists a sequence $(w(n))_{n\in\N}$ such that $w(n)\in \mathscr{B}^t_n$ and $K_{w(n)}\supseteq K_{w(n+1)}$. By Lemma \ref{Kwclosed}, each subcomplex $K_w$ is a closed subset of $K$. It follows from this and completeness of $K$ that $\cap_{n\in\N}K_{w(n)}$ is non-empty. By Lemma \ref{dustsizebranching}, $\cap_{n\in\N}K_{w(n)}\sw\mc{D}_t$ so the proof is complete. \end{proof}

\begin{remark}
The argument above has appeared many times in various guises in the random fractals literature, see e.g. Lemma 8 of \cite{D2009}. However, if $K$ is not completely segregated then (Lemma \ref{yinKst2} does not apply so as) $K_w$ might not be closed and $\cap_{n\in\N}K_{w(n)}$ could be empty. We address this issue in Section \ref{Osec}.

As a consequence of Lemma \ref{l2}, $\tau=\pi$. The reason for making the distinction between $\tau$ and $\pi$ will become clear in Section \ref{llproofs}. \end{remark}

\begin{lemma}\label{coveringK2} 
Let $t>0$. If $\mc{D}_t=\emptyset$ then $A_t$ is finite.
\end{lemma}
\begin{proof}If $\mc{D}_t$ is empty then, by Lemma \ref{l2} there is some $n\in\N_0$ such that $B^t_n=0$. Let $N=\min\{n\in\N\-B^t_n=0\}$. Since $\mc{D}_t=\emptyset$, by Lemma \ref{xeqcomp}, for all $x\in K$ we have $[x]_t=K^{x,s,t}_{w_1}$. 

Suppose that $|w^{x,s,t}_1|>N+1$. Then there is some $w\in W_{N+1}$ such that $K_w\supseteq K^{x,s,t}_{w_1}$. Since $|w|>N$ we have $w\notin \mathscr{B}^t$ so there must be some $w'\in W_*$ such that $|w'|\leq|w|<|w^{x,s,t}_1|$ and $K_{w'}\supseteq K_w\supseteq K^{x,s,t}_{w_1}$ with $\mc{E}_{w'}\leq t$. Since then $x\in K_{w'}$ and $|w'|\leq |w^{x,s,t}_1|$, this contradicts the definition of $w^{x,s,t}_1$. Hence in fact $|w^{x,s,t}_1|\leq N+1$.

Therefore, $A_t$ is a subset of $\{K_w\-w\in W_{N+1}\}$, which implies that $A_t$ is finite.
\end{proof}


\begin{lemma}\label{l1}
Let $t>0$. Then $\P\big[\lim_{n}B^t_n=\infty\text{ or }\exists n, B^t_n=0\big]=1.$
Further, almost surely, $\mc{D}_t\neq\emptyset$ if and only if $\lim\limits_{n\to\infty}B_n^t=\infty$.
\end{lemma}
\begin{proof}
To prove the first statement, we use the result of Theorem 1 in \cite{J1974}, which is a restatement (with minor correction) of a result in \cite{C1967}. 

The probability of a individual at stage $n$ in the process $B^t$ having exactly one offspring is given by
$p_{n1}^t=\mc{S}e^{-r_nt}(1-e^{-r_nt})^{\mc{S}-1}$. Note that for $a\in[0,1]$ and $n\geq 1$, $a(1-a)^n\leq \frac{1}{n+1}(1-\frac{1}{n+1})^n$. Since $\mc{S}\geq 2$ we have $p_{n1}^t\leq \l(1-1/\mc{S}\r)^{\mc{S}-1}<1.$ Hence $\sum_n(1-p_{n1}^t)=\infty$, and from \cite{J1974} we have 
$\P\l[\lim_{n}B^t_n=\infty\text{ or }\exists n\geq\mc{N}, B^t_n=0\r]=1.$ It follows immediately from this and Lemma \ref{l2} that $\lim_{n}B^t_n=\infty$ is almost surely equivalent to $\mc{D}_t\neq\emptyset$.
\end{proof}

\begin{lemma}\label{dustsumrnfinite}
Let $t>0$. If $\sum r_n<\infty$ then $\P\l[\mc{D}_t=\emptyset\text{ or }\lambda(\mc{D}_t)>0\r]=1$.
\end{lemma}
\begin{proof}
The process $n\mapsto B^t_n/ \E\l[B^t_n\r]$ is a discreet parameter, non-negative martingale. By the martingale convergence theorem there is some random variable $L^t$ such that $\frac{B^t_n}{\E\l[B^t_n\r]}\to L^t$ almost surely. 

Recall that in \eqref{mtndef} we gave a formula for $\E\l[B^t_n\r]$. Since $\sum r_n<\infty$,
\begin{align}
\E\l[B^t_n\r]&\geq \mc{S}^n\l(\E\l[B^t_0\r]\exp\l(-t\sum_0^\infty r_j\r)\r)=C\mc{S}^n .\label{ppp64}
\end{align}
where $C=C(t)>0$. In the language of \cite{BD1992}, \eqref{ppp64} means that $B^t$ is uniformly supercritical. Since the offspring distribution of $B^t$ is uniformly bounded (by $\mc{S}$), Theorem 2 of \cite{BD1992} applies. In our notation this means that
\begin{equation}
\l\{B^t_n\to\infty\r\}=\l\{L^t >0\r\}.\label{ppp65}
\end{equation}

Now, suppose $\omega\in\mc{A}$ and that $\mc{D}_t\neq\emptyset$. By Lemma \ref{dustsizebranching}, for all $n\in\N$ we have $B^t_n=|\mathscr{B}^t_n|\geq 1$. By the first part of Lemma \ref{l1} it follows that (almost surely) $B^t_n\to\infty$ as $n\to\infty$. By \eqref{ppp65}, $\lim_{n}\frac{B^t_n}{\E\l[B^t_n\r]}>0.$
From this and \eqref{ppp64},
$\liminf_{n}\frac{B^t_n}{C\mc{S}^n}>0$,
where $\liminf_n \frac{B^t_n}{C\mc{S}^n}$ could potentially be infinite. In fact, though, $B^t_n\leq B^t_0\mc{S}^n\leq \mc{S}^n$ so $\liminf_n \frac{B^t_n}{C\mc{S}^n}$ is finite. We write $l=\liminf\limits_{n\to\infty}\frac{B^t_n}{C\mc{S}^n}$ where $l\in (0,\infty)$ (note $l$ is random). Then there exists $N\in\N$ such that for all $n>N$, $\frac{B^t_n}{C\mc{S}^n}\geq l/2.$ So for all $n>N$ we have $B^t_n\geq \frac{Cl}{2}\mc{S}^n$.

Note that the sets $\bigcup_{w\in \mathscr{B}^t_n}K_w$ are decreasing as $n$ increases. By Lemma \ref{dustsizebranching},
$\lambda(\mc{D}_t)=\lim_{n}\lambda\big(\bigcup_{w\in \mathscr{B}^t_n}K_w\big).$
Recall that $\lambda(K_w)>0$ and $\lambda(K_w)$ depends only on $|w|$ (by Lemma \ref{Kdlemma} and {\Kg}  respectively). Hence $\lambda(K_w)=\frac{\lambda(K)}{\mc{S}^n}$. By {\Ka}, 
$\lambda\l(\cup_{w\in \mathscr{B}^t_n}K_w\r)=|\mathscr{B}^t_n|\lambda (K_w)=\frac{B^t_n}{\mc{S}^n}\lambda(K).$
Thus, for $n>N$, $\lambda\l(\cup_{w\in \mathscr{B}^t_n}K_w\r)\geq C l \lambda(K)>0.$
The result follows by Lemma \ref{dustsizebranching}.
\end{proof}



\begin{lemma}\label{tau2}
If $\limsup\frac{1}{n}\sum_0^nr_j=0$ then for all $t\in\R$, $\P\l[\tau>t\r]>0$.
\end{lemma}
\begin{proof}
If $\limsup\frac{1}{n}\sum_0^nr_j=0$ then by Lemma \ref{a3} we have $\inf_n m^t_n>0$ for all $t>0$. By Lemma \ref{a1} we thus have $g^t<\infty$ for all $t>0$ and by Lemma \ref{a1} we thus have $\P\l[\forall n\in\N, B^t_n\neq 0\r]>0$. By Lemma \ref{l2} we thus have $\P\l[\mc{D}_t>0\r]>0$ for all $t\in\R$. Since $\P\l[\mc{D}_t>0\r]>0$ for all $t\in \R$ we also have $\P\l[\tau>t\r]>0$ for all $t\in\R$. 
\end{proof}

\subsection{Proof of Theorems \ref{tauthm},\ref{phasethm} and Corollary \ref{taucor}} 

Let us begin by proving Theorem \ref{phasethm}. Note that the criteria given for our five phases in terms of $\mc{S}$ and $(r_n)$ assign possible each choice of $\mc{S}$ and $(r_n)$ to precisely one phase. Therefore, it suffices to show that the criteria for each phase are sufficient. Let us begin by covering the supercritical phase.

\textbf{Supercritical.} Suppose that $\limsup\frac{1}{n}\sum_0^nr_j=\infty$. We need to show that $\tau=0$ almost surely. By Lemma \ref{a3}, $\inf_n m^s_n=0$ for all $s>0$, and thus from Lemma \ref{l3} we have that $\P[\exists n\in\N, B^s_n=0]=1$. By Lemma \ref{l2} $\P\l[\mc{D}_s=\emptyset\r]=1$ for all $s>0$ and by \eqref{Dmon} we have $\P\l[\forall s>0, \mc{D}_s=\emptyset\r]=1$. Hence $\P\l[\tau=0\r]=1$.

We will now give the arguments for the four remaining phases. Note that, in the lower/upper subcritical and semicritical phases, Lemma \ref{tau2} tells us that $\P\l[\tau>0\r]=1$ and $\P\l[\tau>t\r]>0$ for all $t\in\R$. The behaviour of $\tau$ in the critical phase is discussed below.

\textbf{Lower Subcritical.} Suppose that $\sum\mc{S}^nr_n<\infty$ and consider $t\in(0,\tau)$. Hence, by definition of $M$, for each (deterministic) $s<\infty$ only finitely many coalescence events occur during $[0,s]$. Since $t<\tau<\infty$, almost surely only finitely many coalescence events have occurred during $[0,t]$. By Lemma \ref{xeqcomp}, $A_t$ is a finite set, which combined with Lemma \ref{finatomsdust} implies that $\lambda(\mc{D}_t)>0$.

\textbf{Upper Subcritical.} Suppose that $\sum\mc{S}^nr_n=\infty$ and $\sum r_n<\infty$. Consider $t\in(0,\tau)$. If $A_t$ was finite then, since $\mc{D}_t\neq\emptyset$, by Lemma \ref{finatomsdust} the set $\mc{D}_t$ must contain a subcomplex of $K$; but Lemma \ref{finglobalratedust} implies that this is not the case. Hence in fact $A_t$ must be infinite. By Lemma \ref{dustsumrnfinite}, $\lambda(\mc{D}_t)>0$ almost surely.

\textbf{Semicritical.} Suppose that $\sum r_n=\infty$ and $\limsup\frac{1}{n}\sum_0^nr_j=0$. Consider $t\in(0,\tau)$. The same argument applies here as given above in the upper subcritical case to show that $A_t$ is infinite. However, in this case Lemma \ref{f2} tells us that $\P\l[\lambda(\mc{D}_s)=0\r]=1$ for fixed $s\in(0,\infty)$. By \eqref{Dmon}, in fact $\P\l[\forall s>0, \lambda(\mc{D}_s)=0\r]$.

\textbf{Critical.} Suppose that $\limsup\sum_0^nr_j\in(0,\infty)$ and write $\mc{L}=\limsup\sum_0^nr_j$. Consider first if $s<t_0$. Then by combining Lemmas \ref{l3} and \ref{an3} we have that $\P\l[\exists n\in\N B^s_n=0\r]<1$, so as by Lemma \ref{l1}, 
$\P\l[\mc{D}_s\neq\emptyset\r]>0\text{ for }s<t_0.$ Hence $\P\l[\tau>s\r]\in(0,1)$ for all $s<t_0$.

Similarly, by Lemma \ref{l3}, $\P\l[\exists n\in\N, B^{t_0}_n=0\r]=1$ and hence by Lemma \ref{l1}, $\P\l[\mc{D}_{t_0}=\emptyset\r]=1$. By \eqref{Dmon} we then have $\P\l[\mc{D}_{s}=\emptyset\r]=1$ for all $s\geq t_0$.

By Lemma \ref{l2} we have $\P\l[\tau=\pi\r]=1$, so by Lemma \ref{tault0} we have $\P\l[\tau<t_0\r]=1$. 
Now consider $t\in(0,\tau)$. The same argument as in the semicritical case tells us that $A_t$ must be infinite and that $\P\l[\forall s>0, \lambda({D}_s)=0\r]=1$. This completes the proof of Theorem \ref{phasethm}. \hfill\qed\vspace{1ex}

We now prove Theorem \ref{tauthm}. The first part of the statement of Theorem \ref{tauthm} is a trivial consequence of Lemma \ref{coveringK2}; if $t>\tau$ then $\mc{D}_t=\emptyset$ almost surely and, by Lemma \ref{coveringK2}, $A_t$ is almost surely finite. 

For the second statement, by Theorem \ref{phasethm} we have that $\P\l[\tau=0\r]=1$ if and only if our model is supercritical. So, assume our model is not supercritical. By Theorem \ref{phasethm} there is some (deterministic) $\delta>0$ such that $\P\l[\mc{D}_\delta\neq\emptyset\r]>0$, so as by Lemmas \ref{philemma} and \ref{l2} we have that $t\mapsto \P\l[\mc{D}_t\neq\emptyset\r]$ is continuous on $[0,\delta)$. Note that for all $\epsilon>0$, $$\P\l[\tau=0\r]\leq \P\l[\tau\leq \epsilon\r]=\P\l[\mc{D}_\epsilon=\emptyset\r]=1-\P\l[\mc{D}_\epsilon\neq\emptyset\r],$$ 
which by continuity tends to $1-\P\l[\mc{D}_0\neq\emptyset\r]=0$ as $\epsilon\downarrow 0$. Hence $\tau>0$ almost surely. \hfill\qed\vspace{1ex}

We now prove Corollary \ref{taucor}. By Lemma \ref{l2} we have $\tau=\pi$ almost surely and combining this with Lemma \ref{piextinct} we obtain that $\mc{D}_\tau=0$ almost surely, provided $\limsup_n\frac{1}{n}\sum_0^nr_j<\infty$. By Theorem \ref{phasethm}, if our model is not supercritical then $\limsup_n\frac{1}{n}\sum_0^n r_j<\infty$, which completes the proof. \hfill\qed\vspace{1ex}


\section{The case $\mc{O}\neq\emptyset$}
\label{Osec}

We now describe the modifications required to prove our results in the case where $K$ is a segregated space but potentially not a completely segregated space (i.e. we allow $\mc{O}\neq \emptyset$). Essentially, the difference in this case is that Lemma \ref{Kwclosed} breaks down. We used Lemma \ref{Kwclosed} in precisely two places, namely the proofs of Lemma \ref{yinKst2} and Lemma \ref{l2}. It is these two lemmas which we will `repair' to deal with the case $\mc{O}\neq \emptyset$. We will replace them, respectively, with the following.

\begin{lemmaa}\label{yinKst3}
Almost surely, for all $0<s<t$, all $x\in K$ and all $n\in\N$, if $(E_m^{x,s,t})$ is infinite then $\lim\limits_{m\to\infty}p_m^{x,s,t}\in K_{w_n}^{x,s,t}$.
\end{lemmaa}

\begin{lemmab}\label{l22}
Almost surely, for all $t\in [0,\infty)$, $\mc{D}_t=\emptyset$ if and only if $B^t_n=0$ for some $n\in\N_0$.
\end{lemmab}

Lemmas {\refyinKst3} and {\refl22} are the same (respectively) as Lemmas \ref{yinKst2} and \ref{l2}, except for the presence in both cases of the `almost surely'. Their proofs are given in Section \ref{llproofs}.

Let $\mc{N}$ be the null set of $\Omega$ on which the `almost surely' in Lemma {\refyinKst3} does not hold. This is the null set (that we mentioned in Section \ref{modelformaldef}) on which we wish to define $X$ differently in the case $\mc{O}\neq\emptyset$. So, fix some point $x^*\in K$ and from now, for all $s<t$ define
\begin{equation}\label{pp4}
X_{s,t}=\Big\{
\begin{array}{ll}
\text{as in }\eqref{pp3} & \text{ if }\omega\in\Omega\sc\mc{N}\\
x^* & \text{ if }\omega\in\mc{N}.
\end{array}
\end{equation}
In words, when $\omega\in\mc{N}$ the flow instantaneously (and at all times) moves all the particles to the point $x^*$. Thus $\tau=0$ on $\mc{N}$.

The arguments in the proof of Theorem \ref{existthm}, using Lemma {\refyinKst3} in place of Lemma \ref{yinKst2}, work as before so long as $\omega\in\Omega\sc\mc{N}$. On the other hand, for $\omega\in\mc{N}$ it is readily seen that \eqref{pp4} trivally implies the conclusions of Theorem \ref{existthm}. Thus Theorem \ref{existthm} remains true.

Essentially the same principle applies to using Lemma {\refl22} in place of Lemma \ref{l2}; the results in Section \ref{phaseproofsec} that were stated for all $\omega\in\Omega$ and rely upon Lemma \ref{yinKst2} and/or \ref{l2} now hold only almost surely. It is a simple matter to check that this is sufficient to make the proof of Theorems \ref{tauthm},\ref{phasethm} and Corollary \ref{taucor} go through as before.

\subsection{Proof of Lemmas {\refyinKst3} and {\refl22}}
\label{llproofs}

Recall Remark \ref{K4im}, that {\Kc} was immediately if $\mc{O}=\emptyset$. In the case $\mc{O}\neq\emptyset$ it is {\Kc} that fills the gap, as the following arguments show.

We first prove Lemma {\refyinKst3}. Let $\mathscr{E}=\{(x,s,t)\in K\times(0,\infty)^2\-|E^{x,s,t}|=\infty\}$
and define an equivalence relation on $\mathscr{E}$ by 
$(x,s,t)\sim (x',s',t')$ if and only if $E^{x,s,t}=E^{x',s',t'}.$
Let $[x,s,t]$ denote the equivalence class of $(x,s,t)$ in $\mathscr{E}$ under $\sim$. In view of this definition, we write $E^[x,s,t]=E^{x,s,t}$ and similarly for $(u^{x,s,t}_m, w^{x,s,t}_m, p^{x,s,t})$. It follows from {\Ka} and Definition \ref{Eseq} that $E_1^{x,s,t}=E_1^{x',s',t'}$ if and only if $(x,s,t)\sim (x,'s',t')$. 

Let $(\hat{s}_k,\hat{t}_k)_{k\in \N}$ be a countable dense subset of $\{(s,t)\in\R^2\-s\leq t\}$ and let $(\hat{x}_j)_{j\in\N}$ be such that for all $w\in W_*$ there is some $x_j\in K_w$. Note that since $E_1^{x,s,t}<E_2^{x,s,t}$ for all $x,s,t$, for each $(x,s,t)\in\mathscr{E}$ there is some $k$ such that $E^{x,s,t}=E^{x,\hat{s}_k,\hat{t}_k}$. For fixed $k$, from Definition \ref{Eseq} we have $E_1^{x,\hat{s}_k,\hat{t}_k}=E_1^{x',\hat{s}_k,\hat{t}_k}$ if and only if $w_1^{x,\hat{s}_k,\hat{t}_k}$, hence for all $x\in K$ and all $k\in\N$ there is some $j\in\N$ such that $E^{x,s,t}=E^{\hat{x}_j,\hat{s}_k,\hat{t}_k}$. Thus,
\begin{align}
&\l\{\exists (x,s,t)\in\mathscr{E}, \exists n\in\N, \lim_{m\to\infty} p^{x,s,t}_m\notin K^{x,s,t}_{w_n}\r\}\label{cont23}\\
&\hspace{7pc}=\bigcup\limits_{j,k,n\in\N}\l\{p^{[\hat{x}_j,\hat{s}_k,\hat{t}_k]}_m\notin K^{[\hat{x}_j,\hat{s}_k,\hat{t}_k]}_{w_n}\r\}.\label{cont22}
\end{align}

For fixed $j,k$ and $n$, each $p^{[\hat{x}_j,\hat{s}_k,\hat{t}_k]}_m$ is sampled (independently) from within $K^{[\hat{x}_j,\hat{s}_k,\hat{t}_k]}_{w_m}$ according to $\lambda$. So, by {\Kc}, with probability $1/\mc{S}$ we have 
\begin{equation}\label{cont21}
p_m^{[\hat{x}_j,\hat{s}_k,\hat{t}_k]}\in K_{w_{m+1}}^{[\hat{x}_j,\hat{s}_k,\hat{t}_k]}\sw\overline{K_{w_{m+1}}^{[\hat{x}_j,\hat{s}_k,\hat{t}_k]}}\sw K^{[\hat{x}_j,\hat{s}_k,\hat{t}_k]}_{w_m},
\end{equation}
By the Borel-Cantelli lemma, \eqref{cont21} occurs for infinitely many $m\in\N$, with probability one. So, for each $n\in\N$ almost surely we can find some $m\geq n$ for which \eqref{cont21}
 holds, implying that 
$\lim_{r\to\infty}p_r^{[\hat{x}_j,\hat{s}_k,\hat{t}_k]}\in\overline{K_{w_{m+1}}^{[\hat{x}_j,\hat{s}_k,\hat{t}_k]}}\sw K^{[\hat{x}_j,\hat{s}_k,\hat{t}_k]}_{w_n}.$
Thus $\P\big[p^{[\hat{x}_j,\hat{s}_k,\hat{t}_k]}_m\notin K^{[\hat{x}_j,\hat{s}_k,\hat{t}_k]}_{w_n}\big]=0$. Combining this with \eqref{cont22} we have
that \eqref{cont23} is a $\P$-null, which completes the proof. \hfill\qed\vspace{1ex}

We give the proof of Lemma {\refl22} in two parts, the first of which is the following lemma. Thanks to Lemma {\refyinKst3}, all the results stated in Sections \ref{phaseproofsec}-\ref{tausec} are available to us, with the caveat that results which previously held for all $\omega\in\Omega$ now hold only almost surely.

\begin{lemma}\label{l222}
$\P\l[\forall t\in \Q,\text{ if }\forall n\in\N\;B^t_n\neq 0\text{ then }\mc{D}_t\neq 0\r]=1$.
\end{lemma}
\begin{proof}
Since $\Q$ is countable it suffices to prove the result for a single fixed $t\in \Q$. In fact, we can prove for any fixed $t\in\R$, as follows. Suppose that $Q=\{\forall n\in\N, B^t_n \neq 0\}$ has positive probability (and note that if this has probability zero then there is nothing to prove). Let $\P_Q$ denote the conditional measure of $\P$ on the event $Q$.

By Lemma 4.10 of \cite{L1992}, the distribution of $B^t$ under $\P_Q$ is that of a GWVE with $n^{th}$ stage offspring distribution given by
\begin{equation}\label{cont30}
f^*_n(s)=\frac{f_n(q_n+(1-q_n)s)-q_{n-1}}{1-q_{n-1}},
\end{equation}
where $q_n=\P\l[\exists m\geq n B^t_m=0\,|\, B^t_n=1\r]$ and $f_n(s)$ is the generating function of the offspring distribution in stage $n$ of $B^t$. Since $q_n=\lim_{N\to\infty}f_n\circ \ldots \circ f_{n+N}(0)$ we have $f^*_n(0)=0$ which in turn means that, under $\P_Q$, each individual in $B^t$ has at least one child.

This tells us the behaviour of $|\mathscr{B}^t|$ under $\P_Q$, but we need a little extra work to describe $\mathscr{B}^t$ itself. The clocks $\{\mc{E}_w\-w\in W_n\}$ are all independent and identically distributed (under $\P$). Therefore, under $\P_Q$, the number $k$ of elements of $W_n$ which are in $\mathscr{B}^t_n$ is given by \eqref{cont30}, but precisely which such elements of $W_n$ is given by the uniform distribution on the set of subsets of $W_n$ of size $k$. 

In view of this description, define a sequence $(w_n)_{n\in\N_0}$ as follows. Set $w_0=\emptyset$ and note that $w_0\in B^t_0$ $\P_Q$-almost surely. Now, if $w_n$ is defined and $w_n\in B^t_n$ $\P_q$-almost surely, enumerate the set of children of $w_n$ in $\mathscr{B}^t_{n+1}$ as $C_{n+1}\sw W_{n+1}$. Independently of all else, sample $w_{n+1}$ uniformly from $C_{n+1}$.

By our description of $B^t$ under $\P_Q$, using {\Kc} we have that with probability (at least) $1/\mc{S}$,
$\overline{K_{w_{n+1}}}\sw K_{w_n}.$
The offspring distributions of each individual in $\mathscr{B}^t_n$ are independent, hence by the Borel-Cantelli lemma there is almost surely an infinite subsequence $(\wt{w}_n)$ of $(w_n)$ such that $\overline{K_{\wt{w}_{n+1}}}\sw K_{\wt{w}_n}$ for all $n$. Thus, $\cap_n K_w=\cap_n K_{\wt{w}_n}$ is non-empty. It follows from this and the `almost sure' replacement of Lemma \ref{dustsizebranching} (see our comments before Lemma \ref{l222}) that, almost surely, $\mc{D}_t\supseteq \bigcap_n K_{\wt{w}_n}\neq\emptyset$.\end{proof}

Our task now is to upgrade Lemma \ref{l222} into Lemma {\refl22}. By Lemma \ref{l222}, almost surely, for all $q\in Q$, $\tau<q$ if and only if $\pi<q$. Since $\Q$ is dense in $\R$ it follows immediately that $\tau=\Q$ almost surely. Recall that $B^t_n\leq B^t_s$ for $s\leq t$. It follows from this and Lemma \ref{piextinct} that, almost surely, for all $t\in\R$, $\exists n\in\N_0 B^t_n=0$ if and only if $t\geq \pi$. Hence, by (the almost sure version of) Lemma \ref{dustsizebranching} we have $\mc{D}_\pi=\emptyset$. Since $\tau=\pi $ almost surely we have $\mc{D}_{\tau}=\mc{D}_\pi$ almost surely and thus $\P\l[\mc{D}_\tau=\emptyset\r]=1$. Therefore, using \eqref{Dmon} we have that almost surely, $\mc{D}_t=0\;\iff\;t\geq \tau\;\iff\;t\geq\pi\;\iff\;\exists n\in\N_0, B^t_n=0.$ \hfill\qed\vspace{1ex}


\section{The Hausdorff dimension of the dust}
\label{nendsec}

It is natural to ask further questions about the non-empty null dust in the semicritical and critical phases. According to Lemma \ref{dustsizebranching} and our comments following \eqref{iiiii7}, $\mc{D}_t$ is a random fractal. In fact, $\mc{D}_t$ belongs to the large class of random fractals which are, in some sense, stochastic generalizations of iterated function systems (IFSs). See \cite{F2003} for an introduction to fractal geometry and IFSs. 

IFSs have been generalised in many directions, both deterministically and stochastically, and formulas for the Hausdorff dimension of the corresponding attractors have been obtained in increasing generality; see \cite{D2009}, \cite{M2010} and the references therein. Generality sufficient to cope with $\mc{D}_t$, at least in terms of Hausdorff dimension, seems to have been reached only recently and a result corresponding to the Hausdorff measure of $\mc{D}_t$ does not seem to be known. 

Let $||\cdot||$ denote the Euclidean norm on $\R^d$, and let $\mc{L}^d$ denote $d$ dimensional Lebesgue measure. Let $A^\circ$ denote the (topological) interior of the set $A$, and let the diameter of $A$ be given by $\diam(A)=\sup\{||x-y||\-x,y\in E\}$. Recall that a similarity $f$ is a function between subsets of $\R^d$ such that for some $\eta\in (0,\infty)$ and all $x,y$, $||f(x)-f(y)||=\eta||x-y||$. We write $\eta=\lip(f)$. Recall also that $\dim_\mc{H}(A)$ denotes the Hausdorff dimension of $A$ (for $A\sw\R^d$ this is with respect to the metric $(x,y)\mapsto||x-y||$).


In order to link our results to those of \cite{D2009}, we must impose some extra assumptions on $K$. 

\begin{defn}
We say $K$ is D-compatible if $K\sw\R^d$, $D_K=||\cdot||$, and 
\begin{enumerate}
\item For all $w\in W_*$, $K_w$ is compact.
\item For all $w\in W_*$ and $i\in S$ there exists a similarity  $f^{(w,i)}:K_w\to K_{wi}$. 
\item There exists $\epsilon,\epsilon'\in (0,1)$ and a sequence $(l_n)\sw[\epsilon,\epsilon']$ such that for all $w\in W$, $\lip(f^{(w,i)})=l_{|w|}.$
\item There exists $\kappa>0$ such that for all $w\in W_*$, $\mc{L}^d(K_w^\circ)\geq\kappa\diam(K_w)^d$.
\end{enumerate}
\end{defn}



\begin{normaltheorem}\label{dimDt}
Suppose that $K$ is D-compatible and that $\P\l[\mc{D}_t\neq \emptyset\r]>0$. Let $\mathscr{L}=\limsup_{n}\frac{1}{n}\sum_1^nr_j$ and $\mathscr{S}=\limsup_{n}\frac{1}{n}\sum_1^n(-\log l_n)$.
Conditional on $\{\mc{D}_t\neq\emptyset\}$,
$$\dim_\mc{H}(\mc{D}_t)=\l(\frac{\log|S|-t\mathscr{L}}{\mathscr{S}}\r)\,\vee\, 0.$$
By setting $r_n=0$, it follows that $\dim_\mc{H}(K)=\frac{\log|S|}{\mathscr{S}}$.
\end{normaltheorem}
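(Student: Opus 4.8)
The plan is to realise $\mc{D}_t$ as a random Moran-type construction and read off its Hausdorff dimension from the framework of \cite{D2009}, reducing the theorem to identifying two exponential rates and a real-analytic simplification. First I would invoke Lemma \ref{dustsizebranching} to write, almost surely, $\mc{D}_t=\bigcap_{n=0}^\infty\bigcup_{w\in\mathscr{B}^t_n}K_w$, so that $\mc{D}_t$ is exactly $K$ with a randomly chosen collection of complexes deleted, and at each level $n$ the surviving word set $\mathscr{B}^t_n$ is the generation-$n$ population of the GWVE of Lemma \ref{gwvecoupling}. The D-compatibility hypotheses are tailored to place this construction in the class treated in \cite{D2009}: conditions \textit{1}--\textit{3} make each $K_w$ a compact similar copy of its parent with a level-homogeneous ratio $l_{|w|}$, and condition \textit{4}, $\mc{L}^d(K_w^\circ)\geq\kappa\,\diam(K_w)^d$, provides the uniform interior/thickness bound (an open-set-type condition) ensuring that covering counts genuinely reflect dimension. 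A careful point to verify is that the level-homogeneity of the ratios lets \cite{D2009} be applied with \emph{deterministic} contraction ratios and purely random branching.

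Next I would extract the two rates. The geometry is elementary: iterating $\lip(f^{(w,i)})=l_{|w|}$ gives $\diam(K_w)=\diam(K)\prod_{j=0}^{n-1}l_j$ for $|w|=n$, so $-\log\diam(K_w)=\sum_{j=0}^{n-1}(-\log l_j)+O(1)$, and since $l_n\in[\epsilon,\epsilon']$ the index shift is harmless; thus $\tfrac1n(-\log\diam(K_w))$ has $\limsup$ equal to $\mathscr{S}$, identifying $\mathscr{S}$ as the exponential decay rate of the diameters. For the counting rate I would analyse $B^t_n=|\mathscr{B}^t_n|$. By \eqref{pp23}, $\E[B^t_n]=\E[B^t_0]\,m^t_n$ with $\tfrac1n\log m^t_n=\log|S|-t\,\tfrac1n\sum_1^n r_j$. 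Conditional on $\{\mc{D}_t\neq\emptyset\}$, equivalently (by Lemma \ref{l1}) on $\{B^t_n\to\infty\}$, the martingale $B^t_n/\E[B^t_n]$ converges to a limit $W$ which is strictly positive on non-extinction, because the offspring laws are bounded (binomial with $|S|$ trials) so the relevant $L\log L$ moment condition is automatic; this is a Kesten--Stigum/Biggins-type input of the kind already used through \cite{BD1992} in Lemma \ref{dustsumrnfinite}. Hence, almost surely on $\{\mc{D}_t\neq\emptyset\}$, $\tfrac1n\log B^t_n=\log|S|-t\,\tfrac1n\sum_1^n r_j+o(1)$, whose relevant $\limsup$ behaviour is governed by $\mathscr{L}$.

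I would then feed the diameter rate $\mathscr{S}$ and the counting rate governed by $\log|S|-t\mathscr{L}$ into the dimension formula of \cite{D2009}, and simplify using the same style of real analysis as in Section \ref{realanalysissec}. The expected output is
$$\dim_\mc{H}(\mc{D}_t)=\Big(\tfrac{\log|S|-t\mathscr{L}}{\mathscr{S}}\Big)\vee 0,$$
where the $\vee\,0$ records that once $\log|S|-t\mathscr{L}\leq 0$ the surviving set is conditionally degenerate, consistent with Corollary \ref{crittimebeh}. The final sentence is a formal substitution: setting $r_n=0$ makes every complex survive, so $B^t_n=|S|^n$, $\mathscr{L}=0$, and $\mc{D}_t=K$, recovering $\dim_\mc{H}(K)=\log|S|/\mathscr{S}$.

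The main obstacle I anticipate is twofold. The first difficulty is the probabilistic one just flagged: showing that the random population $B^t_n$ has the same logarithmic growth rate as its mean \emph{conditional on survival}. This is not automatic for GWVEs outside the uniformly supercritical regime of \cite{BD1992} (in the semicritical and critical phases $\sum r_n=\infty$ and $B^t_n$ grows subexponentially in $|S|^n$), so it requires the bounded-offspring Kesten--Stigum/Biggins machinery applied in the varying-environment setting. The second, subtler difficulty is the real-analytic step of verifying that the possibly non-convergent Ces\`aro averages $\tfrac1n\sum_1^n r_j$ and $\tfrac1n\sum_1^n(-\log l_j)$ combine, through the multi-scale (stopping-time) coverings permitted by \cite{D2009}, into the \emph{separated} expression carrying $\limsup$ in both numerator and denominator, rather than a single joint $\liminf$ of the ratio $\tfrac{\log|S|-t\bar r_n}{\bar s_n}$; the two coincide when the denominator rate converges, but establishing the separated form in general is precisely where the strength of \cite{D2009} is needed, and checking its hypotheses exactly is the remaining bookkeeping.
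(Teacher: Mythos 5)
Your overall route coincides with the paper's: realise $\mc{D}_t$ as $\bigcap_n\bigcup_{w\in\mathscr{B}^t_n}K_w$ via Lemma \ref{dustsizebranching}, check that D-compatibility places this random Moran construction within the scope of \cite{D2009}, and read the dimension off from that framework. However, you insert a detour that the paper does not need and that creates a genuine gap. Theorem 1 of \cite{D2009}, as the paper applies it, is phrased directly in terms of the \emph{expected} offspring numbers: one sets $\alpha^t_{s,n}=|S|(l_n)^s e^{-r_{n+1}t}$ and obtains, conditionally on $\mc{D}_t\neq\emptyset$, that $\dim_\mc{H}(\mc{D}_t)=\sup\{s\ge 0 : \rho^t(s)>0\}$ where $\rho^t(s)=\liminf_n\frac{1}{n}\sum_{j\le n}\log\alpha^t_{s,j}$. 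No almost-sure statement about the growth of the random counts $B^t_n$ is required. Your proposal instead routes through the claim that $\frac{1}{n}\log B^t_n=\log|S|-t\,\frac{1}{n}\sum_1^n r_j+o(1)$ almost surely on survival, to be deduced from positivity of the martingale limit $W$ on non-extinction. As you yourself concede, this is exactly where the argument is unsupported: the only such input available in the paper, Theorem 2 of \cite{BD1992} as used in Lemma \ref{dustsumrnfinite}, applies in the uniformly supercritical regime $\sum r_n<\infty$, whereas the cases where the theorem is interesting (semicritical and critical) have $\sum r_n=\infty$, so that $\E[B^t_n]/|S|^n\to 0$ and positivity of $W$ on non-extinction is neither established anywhere in the paper nor automatic for GWVEs. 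As written that step is a gap; the repair is simply not to take the detour, and to apply \cite{D2009} to the mean offspring sequence as the paper does.

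Two smaller remarks. Your identification of the geometric rate, $\diam(K_w)=\diam(K)\prod_{j<n}l_j$ for $|w|=n$ so that the decay exponent is $\mathscr{S}$, matches the paper, as does the observation that $r_n\equiv 0$ recovers $\dim_\mc{H}(K)=\log|S|/\mathscr{S}$. Your second worry, that the Ces\`aro averages must separate into individual $\limsup$'s rather than a joint $\liminf$ of $\log|S|-t\bar r_n-s\bar\ell_n$, is well placed: in general $\liminf(a_n+b_n)\ge\liminf a_n+\liminf b_n$ gives only one inequality, and the paper disposes of this point with the phrase ``a short calculation.'' That is the one place where the paper's own argument is terse, so your scepticism there is a virtue of the proposal rather than a defect.
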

\begin{proof}
For each $s>0$ and $n\in\N$ let $\alpha_{s,n}^t=|S|(l_n)^se^{-r_{n+1}t}$ and $\rho^t(s)=\liminf_{n}\frac{1}{n}\sum_{j=1}^n\log \alpha_{s,j}$. Using {\Ka} and the D-compatibility conditions we apply Theorem 1 of \cite{D2009}, which yields that, if $\mc{D}_t\neq\emptyset$, then $\dim_\mc{H}(\mc{D}_t)=\sup\{s\in [0,\infty)\-\rho^t(s)>0\}$.

By Theorem \ref{phasethm}, if $\P\l[\mc{D}_t\neq\emptyset\r]>0$ then $0\leq\limsup_n\frac{1}{n}\sum_1^nr_j<\infty$. Note also that by \textit{3} of the D-compatability conditions, $0\leq -\log \epsilon'\leq -\log(l_n)\leq -\log\epsilon<\infty$. A short calculation shows that
\begin{align*}
\rho^t(s)
&=\log |S|-t\limsup\limits_{n\to\infty}\l(\frac{1}{n}\sum_1^nr_j\r)-s\limsup\limits_{n\to\infty}\l(\frac{1}{n}\sum_1^n(-\log l_n)\r).
\end{align*}
The result follows.
\end{proof}

\textit{This article is identical in all but typographical detail to the version which is to appear in the Annals of Probability. I would like to thank the referee for several comments that greatly improved the presentation of this article.}

\bibliographystyle{plainnat}
\bibliography{biblio1}
\end{document}